\newtheorem{thm}{Theorem}
\newtheorem{lem}[thm]{Lemma}
\newtheorem{prop}[thm]{Proposition}
\theoremstyle{definition}
\newtheorem{defn}[thm]{Definition}
\newtheorem*{defn*}{Definition}
\theoremstyle{remark}
\newtheorem{rem}[thm]{Remark}
\newcommand{\deq}{\mathrel{\mathop:}=}
\newcommand{\e}[1]{\mathrm{e}^{#1}}
\newcommand{\R} {\mathbb{R}}
\newcommand{\C} {\mathbb{C}}
\newcommand{\N} {\mathbb{N}}
\newcommand{\Z} {\mathbb{Z}}
\newcommand{\E} {\mathbb{E}}
\newcommand{\adj}{^{*}} 
\newcommand{\mr}[1]{\mathring{#1}}
\newcommand{\brkt}[1]{\langle #1 \rangle}
\newcommand{\bbrktt}[1]{\llbracket #1 \rrbracket}
\DeclareMathOperator{\diag}{diag}
\DeclareMathOperator{\Tr}{Tr}
\DeclareMathOperator{\re}{\mathrm{Re}}
\DeclareMathOperator{\im}{\mathrm{Im}}
\newcommand{\caB}{{\mathcal B}}
\newcommand{\caD}{{\mathcal D}}
\newcommand{\caH}{{\mathcal H}}
\newcommand{\caS}{{\mathcal S}}
\newcommand{\caU}{{\mathcal U}}
\newcommand{\caX}{{\mathcal X}}
\newcommand{\caY}{{\mathcal Y}}
\newcommand{\bbD}{{\mathbb D}}
\newcommand{\bbE}{{\mathbb E}}
\newcommand{\bbI}{{\mathbb I}}
\newcommand{\frX}{{\mathfrak X}}
\newcommand{\frY}{{\mathfrak Y}}
\newcommand{\bsk}{{\boldsymbol k}}
\newcommand{\bsl}{{\boldsymbol l}}
\newcommand{\bsu}{{\boldsymbol u}}
\newcommand{\bsx}{{\boldsymbol x}}
\newcommand{\bsy}{{\boldsymbol y}}
\newcommand{\bsA}{{\boldsymbol A}}
\newcommand{\rmm}{\mathrm{m}}
\newcommand{\wt}{\widetilde}
\newcommand{\ol}{\overline}
\newcommand{\ul}{\underline}
\newcommand{\wh}{\widehat}
\newcommand{\beq}{ \begin{equation} }
	\newcommand{\eeq}{ \end{equation} }
\newcommand{\beqs}{\begin{equation*}}
	\newcommand{\eeqs}{\end{equation*}}
\newcommand{\lone}{\mathbbm{1}} 
\newcommand{\dd}{\mathrm{d}}
\newcommand{\ii}{\mathrm{i}}
\newcommand{\AND}{\quad\text{and}\quad}
\newcommand\norm[1]{\Vert#1\Vert}
\newcommand\Norm[1]{\left\Vert #1 \right\Vert}
\newcommand\expct[1]{\mathbb{E}[#1]}
\newcommand\Expct[1]{\mathbb{E}\left[#1\right]}
\newcommand\prob[1]{\mathbf{P}\left[#1\right]}
\newcommand\Absv[1]{\left\vert#1\right\vert}
\newcommand\absv[1]{\vert#1\vert}
\newcommand\ctr[1]{\bbI[#1]}
\numberwithin{equation}{section} 
\numberwithin{thm}{section}
\title{Functional CLT for Non-Hermitian Random Matrices}
\author{L\'{a}szl\'{o} Erd\H{o}s$^{\dagger}$}
\author{Hong Chang Ji$^{\ddagger}$}
\address{IST Austria, Am Campus 1, 3400 Klosterneuburg, Austria}
\email{lerdos@ist.ac.at}
\email{hongchangji@ist.ac.at}
\thanks{$^\dagger$ Partially supported by ERC Advanced Grant "RMTBeyond" No. 101020331}
\thanks{$\ddagger$ Supported by ERC Advanced Grant "RMTBeyond" No. 101020331}
\date{\today}
\subjclass[2010]{60B20,15B52} 
\keywords{Linear statistics, central limit theorem, Gaussian free field}
\begin{document}
	
	\maketitle
	
	\begin{abstract}
		For large dimensional non-Hermitian random matrices $X$ with real or complex independent, identically distributed, centered entries, we consider the fluctuations of $f(X)$ as a matrix where $f$ is an analytic function around the spectrum of $X$. We prove that for a generic bounded square matrix $A$, the quantity $\Tr f(X)A$ exhibits Gaussian fluctuations as the matrix size grows to infinity, which consists of two independent modes corresponding to the tracial and traceless parts of $A$. We find a new formula for the variance of the traceless part that involves the Frobenius norm of $A$ and the $L^{2}$-norm of $f$ on the boundary of the limiting spectrum.
	\end{abstract}
	
	\section{Introduction}
	
	A distinctive feature of eigenvalues of large dimensional random matrices as point processes is the strong correlation between them. In particular for a Hermitian random matrix, this feature reflects the original idea of Wigner that random matrices can serve as universal models for strongly correlated system. One empirical evidence, among many others, of the strong correlation is the size of the variances of their linear statistics. Consider the eigenvalues $\{\sigma_{i}\}_{1\leq i\leq N}$ of an $(N\times N)$ random matrix $X$ with independent, identically distributed (i.i.d.) entries with zero mean and variance $1/N$. Then the celebrated circular law \cite{Bai1997, Ginibre1965, Girko1984, Tao-Vu2010} states that the distribution of a randomly chosen eigenvalue $\sigma_{i}$, or equivalently the empirical spectral distribution (e.s.d.) of $X$, weakly converges to the uniform distribution on the unit disk $\bbD$. In this setting, the linear statistics $\Tr f(X)=\sum_{i=1}^{N}f(\sigma_{i})$ for a regular test function $f$ has fluctuations of constant order, which happens to be Gaussian \cite{Rider-Silverstein2006}. Compared to the classical central limit theorem where $\sigma_{i}$'s are independent random variables, the linear statistics of $X$ does not need to be scaled by $N^{-1/2}$ showing that its eigenvalues have strong correlation.
	
	The question we ask in this paper is whether a \emph{functional} version of the central limit theorem holds for $X$. More precisely, for an i.i.d. random matrix $X$ above and a deterministic norm-bounded $(N\times N)$ matrix $A$, we are interested in the fluctuations of the functional statistics 
	\beq\label{eq:Cauchy}
	\Tr f(X)A\deq\frac{1}{2\pi\ii}\oint_{\gamma}f(z)\Tr ((z-X)^{-1}A)\dd z,
	\eeq
	where $f$ is analytic in a neighborhood $\caD$ of the closed unit disk $\ol{\bbD}$ and $\gamma$ is a positively oriented curve in $\caD$ encircling the spectrum $\sigma(X)$ of $X$. Note that this quantity is well-defined only when $\sigma(X)\subset\caD$, but we will see below that such an event has very high probability. Our main result, Theorem \ref{thm:main}, proves that $\Tr f(X)A$ has Gaussian fluctuations of constant order.
	
	Our paper is motivated by the analogous question for Wigner matrices answered in \cite{Cipolloni-Erdos-Schroder2020arXiv, Lytova2013}. For a Wigner matrix $W$, a Hermitian random matrix with independent, centered entries of variance $1/N$, it is well known that its e.s.d. converges to the semi-circle distribution in $[-2,2]$ from \cite{Wigner1955} and that the linear statistics $\Tr f(W)$ has Gaussian fluctuations  (see \cite{Lytova-Pastur2009a} for example). Furthermore, it was proved in \cite{Cipolloni-Erdos-Schroder2020arXiv} that the functional statistics $\Tr f(W)A$ has Gaussian fluctuations in the limit $N\to\infty$, where the test function can be chosen in both macro- and mesoscopic regimes, that is, $f(X)\deq g(N^{\alpha}(x-E))$ for some $\alpha\in[0,1)$ and $E\in (-2,2)$. The motivation to study functional statistics is more evident in this setting, since it relates to overlaps of eigenstates via spectral decomposition
	\beqs
		\Tr f(W)A=\sum_{i=1}^{N}f(\lambda_{i})\brkt{\bsu_{i},A\bsu_{i}},\qquad W\bsu_{i}=\lambda\bsu_{i},\,\,\norm{\bsu_{i}}=1.
	\eeqs
	
	In our setting, since $X$ is not normal, the matrix-function $f(X)$ itself is defined only when $f$ is analytic in a domain $\caD$ containing the spectrum of $X$. In particular all eigenvalues of $X$ must contribute to $f(X)$ and we cannot go below the macroscopic regime when considering $f(X)$. This differentiates $\Tr f(X)A$ from the \emph{tracial} statistics $\Tr f(X)$, for which we can use the fact that
	\beq\label{eq:spec}
		\Tr f(X)=\sum_{i=1}^{N}f(\sigma_{i})
	\eeq
	to reduce the problem to analyzing the e.s.d. of $X$. The identity \eqref{eq:spec} enables us to define $\Tr f(X)$ for more general test functions $f$, and it was proved in \cite{Cipolloni-Erdos-Schroder2019arXiv} that $\Tr f(X)$ has Gaussian fluctuations when $f$ is $H^{2+}$ or on a mesoscopic scale starting from this.
	Interestingly, the variance of $\Tr f(X)A$ we found in Theorem \ref{thm:main} involves the norm of $L^{2}$-Hardy space, which does not seem to have canonical generalization to non-analytic functions, especially to those vanishing on the boundary $\partial\bbD$.
	
	Another obstacle in studying the functional statistics $\Tr f(X)A$ is the absence of Girko's formula, which is a commonly used technique to study the eigenvalues of non-Hermitian random matrices. For example, Girko's formula played a vital role in \cite{Alt-Erdos-Kruger2018,Alt-Erdos-Kruger2021, Cipolloni-Erdos-Schroder2019arXiv}, and even in the original proofs of the circular law in \cite{Bai1997,Girko1984}. In terms of $X$, a version of Girko's formula in \cite{Tao-Vu2010} is as follows; for a smooth and compactly supported complex function $f$, we have
	\beqs
		\sum_{i=1}^{N}f(\sigma_{i})=-\frac{1}{4\pi}\int_{\C}\Delta f(z)\int_{0}^{\infty}\im \Tr (W_{z}-\ii\eta)^{-1}\dd \eta\dd^{2} z, \qquad
		W_{z}\deq\begin{pmatrix}
			0 & X-z \\ X\adj -\ol{z} & 0
		\end{pmatrix}.
	\eeqs
	Using this formula, we can focus on the Green function of the Hermitian matrix $W_{z}$ along the imaginary axis, not that of $X$, so that we can apply robust approaches developed for Hermitian random matrices such as local laws. The most crucial drawback of Girko's formula is that it applies only to tracial quantities involving the eigenvalues, not the matrix itself. In the same vein as the previous paragraph, the functional statistics $\Tr f(X)A$ concerns both eigenvalues and eigenvectors, thus we need a different approach.
	
	Nevertheless, we find that one of the four $(N\times N)$ blocks of $(W_{z}-\ii\eta)^{-1}$ can serve as an approximation to $(X-z)^{-1}$ when $\eta$ is small enough. Applying this observation to \eqref{eq:Cauchy} we may work with the contour integral of the block of $(W_{z}-\ii\eta)^{-1}$, which effectively replaces Girko's formula. We remark that a similar argument was used in \cite{Erdos-Kruger-Renfrew2018} to express $f(X)g(X)\adj$ as the contour integral of certain block of a Hermitian resolvent. Our argument requires $\eta$ to be very small, well below the optimal scale $\eta\gg 1/N$ typically appearing in local laws for $W_{z}$ (see \cite{Alt-Erdos-Kruger2018} for instance). While it is hard to keep track of $(W_{z}-\ii\eta)^{-1}$ for $\eta\ll 1/N$ when $z$ is inside the limiting spectrum $\bbD$, since we are assuming $\ol{\bbD}\subset\caD$, we can take the contour $\gamma$, hence $z$, in $\caD\setminus\ol{\bbD}$. This allows us to apply the optimal local law for $W_{z}$ outside the spectrum with all ranges of $\eta$ established e.g. in \cite{Alt-Erdos-Kruger-Nemish2019}. 
	
	After approximating $\Tr f(X)A$ with certain functional statistics of $(W_{z}-\ii\eta)^{-1}$, we prove that its moments asymptotically satisfy a recursion which leads to convergence of moments via Wick's theorem. Along the calculation, we find that quantities corresponding to the variances are normalized traces of product of three resolvents with deterministic matrices in between. We adapt the strategy of \cite{Cipolloni-Erdos-Schroder2019arXiv,Cipolloni-Erdos-Schroder2020arXiv} to derive their deterministic approximates, using local laws for product of two resolvents proved in \cite{Cipolloni-Erdos-Schroder2019arXiv}.
	
	We conclude this section with a brief history of Gaussian fluctuations in linear statistics of random matrices. For Hermitian random matrices, tracial CLT has been a classical topic and there have been many results with increasing generality in terms of regularity of $f$; see  \cite{Bai-Yao2005,He-Knowles2017,Lytova-Pastur2009a,Sosoe-Wong2013} for instance. As mentioned above, a functional CLT for Wigner matrices was proved in \cite{Cipolloni-Erdos-Schroder2020arXiv,Lytova2013}, the former covering also mesoscopic regimes. For non-Hermitian matrices, \cite{Coston-ORourke2020,Nourdin-Peccati2010,Rider-Silverstein2006} presented tracial CLT on macroscopic scale for analytic test functions, and later in \cite{Cipolloni-Erdos-Schroder2019arXiv} it was generalized to $H^{2}$ test functions both in macroscopic and partially mesoscopic scales.
	
	The rest of this paper is organized as follows: In Section \ref{sec:model}, we rigorously define our model and state our main result, Theorem \ref{thm:main}. In Section \ref{sec:outline}, we explain the key components of our proof and prove our main result based on them. The CLT for the resolvents of $W_{z}$  for complex-valued $X$ is presented in Section \ref{sec:prf_CLT_G}, which is the main technical
	achievement of our paper. Section \ref{sec:prf_tech} collects computational and technical parts of the proof of the resolvent CLT. Finally, in Section \ref{sec:real} we prove the main result for real-valued $X$.
	
	\subsection{Notational conventions}
		We introduce some conventions used throughout the paper. For each $r>0$, $\bbD_{r}$ denotes the open, centered disk in $\C$ with radius $r$ and $\bbD_{r}^{c}$ stands for its complement in $\C$. For integers $m$ and $n$, we write $\bbrktt{m,n}\deq [m,n]\cap \Z$. When $m=1$, we further abbreviate $\bbrktt{n}\equiv\bbrktt{1,n}$.  We denote the identity matrix of any dimension by $I$. For an $(N\times N)$ matrix $A$, we write $\brkt{A}\deq N^{-1}\Tr A$ for its normalized trace.
		For a $2N\times 2N$ matrix $P$, we write $\{P^{[kl]}\}_{k,l\in\bbrktt{2}}$ to denote its $(N\times N)$ blocks, and conversely for an $(N\times N)$ matrix $A$, we denote by $A^{\{kl\}}$ the $(2N\times 2N)$ matrix satisfying $(A^{\{kl\}})^{[k'l']}=A\delta_{kk'}\delta_{ll'}$; for example,
		\beqs
			P=\begin{pmatrix}
				P^{[11]} & P^{[12]}\\
				P^{[21]} & P^{[22]}
			\end{pmatrix},\qquad
			A^{\{12\}}=\begin{pmatrix}
				0 & A \\ 0 & 0
			\end{pmatrix}.
		\eeqs
		We will often use that $\Tr P^{[ij]}A=\Tr PA^{\{ji\}}$. We use the following abbreviations for sums over $\bbrktt{N}$;
		\begin{align*}
			&\sum_{a_{1},\cdots,a_{n}}\equiv \prod_{i=1}^{n}\sum_{a_{i}=1}^{N},	& &\sum_{a_{1},\cdots,a_{n}}^{(b_{1},\cdots,b_{m})}=\prod_{i=1}^{n}\sum_{\substack{a_{i}=1 \\ a_{i}\neq b_{1},\ldots,b_{m}}}^{N},\quad b_{1},\ldots,b_{m}\in\bbrktt{N}.
		\end{align*}
	
		For a random variable, vector, or matrix $X$, we write $\ctr{X}=X-\expct{X}$. 
		Given two $N$-dependent random vectors $Y$ and $Z$ of the same dimension $n$ with all moments finite and a function $f:\N\to[0,\infty)$, we write
		\beqs
		Y\overset{m}{=}Z+O_{\mathrm{m}}(f(N))
		\eeqs
		if the following holds; for each fixed monic $*$-monomial $p$ of degree $k$, there is a constant $C_{n,k}>0$ such that
		\beqs
		\absv{\expct{p(Y)}-\expct{p(Z)}}\leq C_{n,k}f(N).
		\eeqs
		In other words, $Y$ and $Z$ are $f$-close to each other in the sense of all moments. Note that this concept is really meaningful when $Y$ and $Z$ are of order $1$ and $f(N)\ll 1$, and we apply it only in such occasions.
		
		For two $N$-dependent random variables $X$ and $Y$ with $Y\geq0$, we write $X\prec Y$ or $X=O_{\prec}(Y)$ if the following holds; for all fixed $\epsilon,D>0$, there exists $N_{0}(\epsilon,D)\in\N$ such that
		\beqs
			\prob{\absv{X}>N^{\epsilon}Y}\leq N^{-D},\quad \forall N\geq N_{0}(\epsilon,D).
		\eeqs
		In this case we say that $X$ is \emph{stochastically dominated} by $Y$.
		
		Finally, we use the standard big-$O$ and $\lesssim$ notations; for two functions $f$ and $g$ of $N$ with $g\geq 0$, we write $f\lesssim g$ or $f=O(g)$ when there is a constant $C>0$ such that $f(N)\leq Cg(N)$ for all $N$.

\section{Model and results}\label{sec:model}
	\begin{defn}\label{defn:X}
		Let $\chi$ be either  real or genuinely complex random variable that satisfies the following conditions;
		\begin{itemize}
			\item $\expct{\chi}=0$ and $\expct{\absv{\chi}^{2}}=1$;
			\item for each $p\in\N$, there exists $C_{p}>0$ such that $\expct{\absv{\chi}^{p}}\leq C_{p}$;
			\item $\expct{\chi^{2}}=0$ if $\chi$ is complex-valued\footnote{We made this commonly used assumption only 
			for the sake of simplicity; our method
			can easily handle the exact dependence on arbitrary $\expct{\chi^{2}}\in \C$ as well.}.
		\end{itemize}
		We define $X$ to be the $(N\times N)$ random matrix whose entries are independent and identically distributed (i.i.d.) with the same law as $N^{-1/2}\chi$.
	\end{defn}

	As mentioned in the introduction, a proper definition of $f(X)$ must be restricted to an event of the form $\sigma(X)\subset\caD$. We introduce our choice of such an event in the next lemma, which also shows that this event has high probability.
	\begin{lem}\cite[Lemma 6.1]{Alt-Erdos-Kruger-Nemish2019}\label{lem:sing}
		For each $\delta,\kappa>0$, define the $N$-dependent event $\Omega_{\delta}(\kappa)$ as follows;
		\beq\label{eq:defn_Omega}
		\Omega_{\delta}(\kappa)\deq \left\{\sup_{z\in \bbD_{1+\delta/2}^{c}}\norm{((X-z)(X-z)\adj)^{-1}}\leq \kappa^{-2}\right\}.
		\eeq
		For any fixed $\delta>0$ and $\kappa\in(0,\delta/2)$, we have $\prob{\Omega_{\delta}(\kappa)}>1-N^{-D}$ for all $D>0$ when $N$ is sufficiently large.
	\end{lem}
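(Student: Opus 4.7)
The plan is to reduce the claim to a uniform lower bound on $s_{\min}(X-z)$, since $\|((X-z)(X-z)\adj)^{-1}\|=s_{\min}(X-z)^{-2}$. Writing $W_{z}$ for the Hermitization introduced in the introduction, its eigenvalues are precisely $\pm s_{i}(X-z)$, so the task amounts to excluding the interval $(-\kappa,\kappa)$ from the spectrum of $W_{z}$ simultaneously for all $z\in \bbD_{1+\delta/2}^{c}$, on an event of probability $\geq 1-N^{-D}$.

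First I would dispose of the region of large $|z|$: by the Bai--Yin type bound $\|X\|\prec 2$, for $|z|\geq 3$ one has $s_{\min}(X-z)\geq |z|-\|X\|\geq 1>\kappa$ on an overwhelming event. Next, since $z\mapsto s_{\min}(X-z)$ is $1$-Lipschitz, it suffices to control $s_{\min}(X-z)$ on an $N^{-2}$-net $\caN$ of the compact annulus $\caA\deq\{1+\delta/2\leq |z|\leq 3\}$; this net has cardinality $\rO(N^{4})$, so by a union bound the whole statement reduces to a pointwise tail estimate $\prob{s_{\min}(X-z)\leq 2\kappa}\leq N^{-D-5}$ for each fixed $z\in \caN$.

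To prove this pointwise estimate I would invoke the optimal local law for $W_{z}$ outside its spectrum from \cite{Alt-Erdos-Kruger-Nemish2019}. For $|z|>1$, the self-consistent density of states of $W_{z}$ prescribed by its Matrix Dyson Equation is supported outside a symmetric spectral gap $(-\Delta(|z|),\Delta(|z|))$ around $0$, where $\Delta$ is an explicit positive continuous function on $(1,\infty)$, monotone increasing in $|z|$. The hypothesis $\kappa<\delta/2$ is tailored precisely so that $\Delta(|z|)\geq \Delta(1+\delta/2)>2\kappa$ uniformly on $\caA$. The local law then guarantees that no eigenvalue of $W_{z}$ lies in the slightly shrunken gap $(-\Delta(|z|)+N^{-1+\epsilon},\Delta(|z|)-N^{-1+\epsilon})\supset(-2\kappa,2\kappa)$ with probability $\geq 1-N^{-D-5}$, which yields the pointwise bound.

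The main obstacle is producing the quantitative lower bound on the gap $\Delta(|z|)$ near the inner boundary $|z|=1+\delta/2$ and confirming that the hypothesis $\kappa<\delta/2$ genuinely dominates it---a purely algebraic fact about the Matrix Dyson Equation associated with the i.i.d. model. Once this is in hand, the remaining ingredients---the reduction to a compact annulus, the Lipschitz net argument, and the application of the local law outside the spectrum---are entirely routine.
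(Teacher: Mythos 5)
The paper does not actually prove this lemma; it is imported wholesale from \cite[Lemma 6.1]{Alt-Erdos-Kruger-Nemish2019}, so your proposal can only be judged on its own viability. Your general architecture is the standard one and is fine: rewriting the event as a uniform lower bound $s_{\min}(X-z)\geq\kappa$, disposing of $|z|\geq 3$ via $\norm{X}\prec 2$, using $1$-Lipschitz continuity of $z\mapsto s_{\min}(X-z)$ with an $N^{-2}$-net and a union bound, and deducing the pointwise statement from the absence of eigenvalues of $W_{z}$ outside the support of its self-consistent density.

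The genuine gap is precisely the step you defer as ``a purely algebraic fact about the Matrix Dyson Equation'': the claim that the self-consistent gap satisfies $\Delta(1+\delta/2)>2\kappa$ for every $\kappa<\delta/2$. This is not a routine verification --- it is false. Solving the MDE of the paper at real spectral parameter $w$ (set $u=w+m$, $t=|z|^{2}$, so $w(u)=u\,(t-u^{2}-1)/(t-u^{2})$ and the edges solve $(t-u^{2})^{2}=t+u^{2}$), the lower edge of the symmetrized singular-value density of $X-z$ is $E_{-}(t)=u_{-}(t-1-u_{-}^{2})/(t-u_{-}^{2})$ with $u_{-}^{2}=\tfrac12\bigl(2t+1-\sqrt{8t+1}\bigr)$, which behaves like $\tfrac{2}{3\sqrt{3}}(t-1)^{3/2}$ as $t\downarrow 1$. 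At $|z|=1+\delta/2$ this is of order $\delta^{3/2}$, far below $\delta/2$ for small $\delta$ (even at $\delta=1$ one gets $E_{-}\approx 0.31<0.5$). Since rigidity places $s_{\min}(X-z)$ near $E_{-}$, your pointwise tail bound $\prob{s_{\min}(X-z)\leq 2\kappa}\leq N^{-D-5}$ cannot hold for $\kappa$ in the upper part of $(0,\delta/2)$; your route closes only for $\kappa$ below the self-consistent gap, i.e. $\kappa\lesssim\delta^{3/2}$ when $\delta$ is small. In other words, the hypothesis $\kappa<\delta/2$ is \emph{not} ``tailored'' to the MDE gap, and a proof along your lines must either restrict $\kappa$ to be sufficiently small depending on $\delta$ (which is all the present paper ever uses, since only the existence of some fixed $\kappa(\delta)>0$ enters the subsequent arguments) or supply an entirely different mechanism for the stated range, which the gap of the MDE at $|z|=1+\delta/2$ cannot provide.
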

	Note that the definition \eqref{eq:defn_Omega} of $\Omega_{\delta}(\kappa)$ already
	 implies $\sigma(X)\subset \bbD_{1+\delta/2}$ on the event. 
	Next, we introduce our main result, the central limit theorem for $\Tr f(X)A$ restricted on $\Omega_{\delta}(\kappa)$;
	\begin{thm}\label{thm:main}
		Let $n\in\N$, $\delta>0$ and $\kappa\in(0,\delta/2)$ be fixed, $A_{1},\cdots,A_{n}$ be $(N\times N)$ deterministic matrices with $\norm{A_{i}}\leq 1$, and $f_{1},\cdots,f_{n}$ be fixed analytic functions on $\bbD_{1+\delta}$ with $\norm{f_{i}}_{L^{\infty}(\bbD_{1+\delta/2})}\leq 1$. Define
		\beq\label{eq:defn_L}
			L_{N}(f_{i},A_{i})\deq\lone_{\Omega_{\delta}(\kappa)}\Tr f_{i}(X)A_{i},\qquad i\in\bbrktt{n}.
		\eeq
		Then the centered random vector $(\ctr{L_{N}(f_{i},A_{i})})_{i=1,\cdots,n}$ is approximately Gaussian, that is, for all $\epsilon>0$ we have
		\beq\label{eq:main}
			(\ctr{L_{N}(f_{i},A_{i})})_{i=1,\cdots,n}\overset{\mathrm{m}}{=}(\xi(f_{i},A_{i}))_{i=1,\cdots,n}+O_{\mathrm{m}}(N^{-1/2+\epsilon}),
		\eeq
		where $(\xi(f_{i},A_{i}))_{i=1,\cdots,n}$ is a centered complex Gaussian vector whose covariance is given by
		\beq\label{eq:cov}
		\begin{split}
			\expct{\xi(f_{i},A_{i})\ol{\xi(f_{j},A_{j})}}&=\frac{1}{\pi}\brkt{A_{i}}\ol{\brkt{A_{j}}}\brkt{f_{i}',f_{j}'}_{L^{2}(\bbD)}+\brkt{\mr{A}_{i}\mr{A}_{j}\adj}\brkt{f_{i},f_{j}}_{\wh{L}^{2}(\partial\bbD)},	\\
			\expct{\xi(f_{i},A_{i})\xi(f_{j},A_{j})}&=
			\begin{cases}
				0 & \text{if $\chi$ is complex,}\\
				\frac{1}{\pi}\brkt{A_{i}}\brkt{A_{j}}\brkt{f_{i}',(f_{j}\adj)'}_{L^{2}(\bbD)}+\brkt{\mr{A}_{i}\mr{A}_{j}^{\intercal}}\brkt{f_{i},f_{j}\adj}_{\wh{L}^{2}(\partial\bbD)}& \text{if $\chi$ is real}.
			\end{cases}
		\end{split}
		\eeq
		Here we denoted $f\adj(z)=\ol{f(\ol{z})}$, $\mr{A}\deq A-\brkt{A}$, and
		\beqs
			\brkt{f,g}_{\wh{L}^{2}(\partial\bbD)}\deq\frac{1}{2\pi}\int_{\partial\bbD}(f(z)-f(0))\ol{(g(z)-g(0))}\absv{\dd z}.
		\eeqs
		Furthermore, the mean of $L_{N}(f,A)$ has the following asymptotics for all $\epsilon>0$;
		\beq\label{eq:mean_asymp}
			\expct{L_{N}(f,A)}-N\brkt{A} f(0)=
			\begin{cases}
				O(N^{-1/2+\epsilon}) & \text{if $\chi$ is complex},\\
				\brkt{A}\left(\dfrac{f(1)+f(-1)}{2}-f(0)\right)+O(N^{-1/2+\epsilon}) & \text{if $\chi$ is real}.
			\end{cases}
		\eeq
	\end{thm}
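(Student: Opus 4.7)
The strategy is sketched in the introduction: replace $\Tr f(X)A$ by a contour integral of traces of the Hermitian resolvent $G(z,\eta) = (W_z-\ii\eta)^{-1}$, prove a joint moment CLT for the resulting linear functionals via cumulant expansion and Wick's theorem, and identify the limiting covariance \eqref{eq:cov} by evaluating the resulting double contour integrals. I would carry this out in the following steps.

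\emph{Step 1: Reduction to the Hermitization.} On $\Omega_\delta(\kappa)$, Cauchy's formula \eqref{eq:Cauchy} writes each $L_N(f_i,A_i)$ as a contour integral of $f_i(z)\Tr((z-X)^{-1}A_i)$ along a curve $\gamma \subset \bbD_{1+\delta}\setminus\ol{\bbD_{1+\delta/2}}$. Block inversion of $W_z-\ii\eta$ gives
\[
[(W_z-\ii\eta)^{-1}]^{[21]} = -(X-z)\adj\bigl((X-z)(X-z)\adj+\eta^2\bigr)^{-1},
\]
which on $\Omega_\delta(\kappa)$ equals $-(X-z)^{-1} + O(\eta\kappa^{-3})$ uniformly on $\gamma$. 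Taking $\eta=N^{-K}$ with $K$ sufficiently large, we replace $(X-z)^{-1}$ by $-[G(z,\eta)]^{[21]}$ up to a deterministic negligible error and reduce the theorem to a joint moment CLT for $\oint_\gamma f_i(z)\brkt{G(z,\eta) A_i^{\{12\}}}\dd z$. Crucially, $\gamma$ is disjoint from $\ol\bbD$, so the optimal local law of \cite{Alt-Erdos-Kruger-Nemish2019} for $W_z$ outside its self-consistent spectrum is available down to $\eta \geq N^{-K}$.

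\emph{Steps 2--3: Moment recursion and deterministic approximations.} For the reduced quantity I would run a cumulant expansion in the i.i.d.\ entries of $X$. Each derivative of a $G$ factor lowers the monomial degree by one and produces further $G$-factors: the second-order cumulants generate the variance, while higher cumulants contribute $O_\prec(N^{-1/2})$ errors controlled by the entrywise local law. Matching the resulting recursion against Wick's formula for a Gaussian vector reduces the task to identifying the deterministic leading parts of (i) $\brkt{G(z,\eta)A^{\{12\}}}$, which by the local law equals $\brkt{A}$ times an explicit function of $(z,\eta)$ converging to $1/z$ as $\eta\to 0^+$ for $|z|>1$ and yields the mean \eqref{eq:mean_asymp}; and (ii) $\brkt{G(z_1,\eta)A_1^{\{12\}}G(z_2,\eta)A_2^{\{12\}}}$-type quantities with possibly one further deterministic matrix insertion. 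For (ii) I would adapt the strategy of \cite{Cipolloni-Erdos-Schroder2019arXiv,Cipolloni-Erdos-Schroder2020arXiv}, using their local laws for products of two resolvents as input and chaining them via the resolvent identity to treat the three-resolvent case. The output decomposes into a \emph{tracial} channel coupled to $\brkt{A_i}\ol{\brkt{A_j}}$ and a \emph{traceless} channel coupled to $\brkt{\mr A_i \mr A_j\adj}$.

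\emph{Step 4: Contour evaluation, real case, and main obstacle.} Feeding these approximations into the double contour integral $(2\pi\ii)^{-2}\oint\oint f_i(z_1)\ol{f_j(z_2)}(\cdots)\dd z_1\dd\ol z_2$, the tracial channel produces $\pi^{-1}\brkt{f_i',f_j'}_{L^2(\bbD)}$ by Green's identity on $\ol\bbD$, reproducing the Rider--Silverstein covariance. The traceless channel, whose kernel is analytic outside $\ol\bbD$ and decays at $\infty$, collapses by contour deformation and residues onto $\partial\bbD$, yielding the Hardy pairing $\brkt{f_i,f_j}_{\wh L^2(\partial\bbD)}$; the subtracted constant modes $f_i(0), f_j(0)$ correspond exactly to the part of $f$ absorbed into the mean \eqref{eq:mean_asymp}. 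The real case follows by additionally accounting for the paired contractions $\E[X_{ij}X_{ij}]=N^{-1}$, which generate the pseudocovariance and, via the symmetry $W_z \mapsto \ol{W_{\ol z}}$ available only for real $X$, the mean correction $\brkt{A}((f(1)+f(-1))/2 - f(0))$. I expect the main technical obstacle to be Step~3: deriving the deterministic approximation of the three-resolvent trace with two deterministic matrix insertions with enough precision \emph{uniformly} as $\eta\to 0$ and $|z_j|\to 1^+$ from outside, and then performing the residue calculation on two contours that identifies the traceless contribution with the previously-unseen Hardy-type norm $\brkt{\cdot,\cdot}_{\wh L^2(\partial\bbD)}$.
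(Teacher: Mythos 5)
Your proposal follows essentially the same route as the paper: reduction of $L_{N}(f_{i},A_{i})$ to a contour integral of the $[21]$ block of the Hermitized resolvent at tiny $\eta$ along a contour outside $\ol{\bbD}$ (Proposition \ref{prop:toG}), a cumulant-expansion moment recursion matched against Wick's formula with the variance kernels identified via two-resolvent local laws adapted from \cite{Cipolloni-Erdos-Schroder2019arXiv} (Propositions \ref{prop:Wick} and \ref{prop:tech}), and a final double contour evaluation yielding the $L^{2}(\bbD)$ and Hardy pairings, with the real case and the mean correction coming from the extra contraction in the real cumulant expansion. Only a trivial slip: the $[21]$ block is $+(X-z)\adj\left((X-z)(X-z)\adj+\eta^{2}\right)^{-1}\to(X-z)^{-1}$, the minus sign in \eqref{eq:Hermite} coming instead from $(z-X)^{-1}=-(X-z)^{-1}$ in Cauchy's formula \eqref{eq:Cauchy}.
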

		\begin{rem}
		In particular, for $n=1$ and $A$ and $f$ as above, $L_{N}(f,A)$ has asymptotically the same distribution as the sum of three independent Gaussian random variables $\xi(f,\brkt{A}I),\xi(f,\mr{A}_{\mathrm{d}})$, and $\xi(f,A_{\mathrm{od}})$ where $\mr{A}_{\mathrm{d}}\deq\diag(A_{ii})_{1\leq i\leq N}-\brkt{A}I$ and $A_{\mathrm{od}}\deq A-\diag(A_{ii})_{1\leq i\leq N}$.
	\end{rem}

	\subsection{Gaussian functional representation}\label{sec:repr}
	The goal of this section is to find a representation for $\xi$ as a Gaussian Hilbert space on $H\otimes M_{N}(\C)$, where $H$ is a Hilbert space of test functions and $M_{N}(\C)$ is equipped with the inner product $\brkt{AB\adj}$.
	
	In \cite{Rider-Virag2007IMRN}, it was proved that when $A=I$ the limiting Gaussian process $\xi(f,I)$ can be identified with the \emph{Gaussian free field} conditioned to be harmonic outside the disk. The precise formulation as follows; we consider the Gaussian Hilbert space $\{h(f):f\in H_{0}^{1}(\C)\}$ on the Sobolev space $H_{0}^{1}(\C)$ defined by
	\beqs
		\expct{h(f)}=0,\qquad \expct{h(f)\ol{h(g)}}=\brkt{f,g}_{H_{0}^{1}(\C)},\qquad
		\expct{h(f)h(g)}=\begin{cases}
			0 & \text{if $\chi$ is complex},	\\
			\brkt{f,g\adj}_{H_{0}^{1}(\C)} & \text{if $\chi$ is real},
		\end{cases}
	\eeqs
	where
	\beqs
		H_{0}^{1}(\C)\deq \ol{C_{0}^{\infty}(\C)}^{\norm{\cdot}_{H_{0}^{1}(\C)}},\qquad \brkt{f,g}_{H_{0}^{1}(\C)}=\brkt{\nabla f,\nabla g}_{L^{2}(\C)}.
	\eeqs
	We decompose the Hilbert space $H_{0}^{1}(\C)$ as the direct sum of three subspaces, $H_{0}^{1}(\bbD)$, $H_{0}^{1}(\ol{\bbD}^{c})$, and their complement $H_{0}^{1}(\partial\bbD^{c})^{\perp}$. From \cite[Theorem 1.1 and Corollary 1.2]{Rider-Virag2007IMRN} (see also Section 2.1 of \cite{Cipolloni-Erdos-Schroder2019arXiv}), we have that
	\beqs
		\expct{\absv{\xi(f,I)}^{2}}=\frac{1}{\pi}\norm{f'}_{L^{2}(\bbD)}=\frac{1}{4\pi}\norm{P_{0}f}_{H_{0}^{1}(\C)}^{2},
	\eeqs
	where $P_{0}$ denotes the orthogonal projection onto $H_{0}^{1}(\bbD)\oplus H_{0}^{1}(\partial \bbD^{c})^{\perp}$, the subspace consisting of functions harmonic outside the closed unit disk.
	Therefore $\xi(\cdot,I)$ can be identified with $(4\pi)^{-1/2}P_{0}h$ where $P_{0}h(f)\deq h(P_{0}f)$.
	
	For the functional part in $\xi(f,A)$, we consider the Gaussian field $u$ on the Hardy space $\caH^{2}$ defined by
	\beq\label{eq:GAFunc}
	u(f)=\sum_{k=1}^{\infty}\xi_{k}\wh{f}(k),\qquad f\in \caH^{2},
	\eeq
	 where $\{\xi_{k}:k\in \N\}$ is a collection of i.i.d. standard Gaussian variables in $\C$ if $\chi$ is complex-valued and in $\R$ if $\chi$ is real-valued, and
	\beqs
		\caH^{2}\deq \{f\in L^{2}(\partial\bbD):\wh{f}(k)=0,\,\, \forall k<0\},\qquad \wh{f}(k)\deq \frac{1}{2\pi}\int_{\absv{z}=1}f(z)z^{-k}\absv{\dd z},\quad k\in\Z
	\eeqs
	is considered as a Hilbert subspace of $L^{2}(\partial\bbD)$. For a fixed $A\in M_{N}(\C)$ with $\brkt{A}=0$, we have
	\beqs
		\expct{\absv{\xi(f,A)}^{2}}=\expct{\absv{u(f)}^{2}}\brkt{AA\adj},\qquad \expct{\xi(f,A)^{2}}=\expct{u(f)^{2}}\brkt{AA^{\intercal}}.
	\eeqs
	Note that $\expct{u(f)^{2}}=0$ when $\chi$ is complex.
	
	Now we turn to finding an expression that accounts for the matrix part of $\xi$. Firstly for the tracial part, we consider $P_{0}h\otimes I$ as a random functional on $H_{0}^{1}(\C)\otimes M_{N}(\C)$ given by
	\beq\label{eq:nonf_field}
		(P_{0}h\otimes I)(f\otimes A)\deq h(P_{0}f)\brkt{A}.
	\eeq
	It immediately follows that the covariances of \eqref{eq:nonf_field} for different $(f,A)$'s match the first term of \eqref{eq:cov}. Second, for the functional part, we consider $N^{2}$ i.i.d. copies $\{u_{ij}:i,j\in\bbrktt{N}\}$ of $u$ and define the functional 
	\beqs
		U(f,A)\deq \Tr A\caU(f)\adj, \qquad \caU(f)\in M_{N}(\C),\quad \caU(f)_{ij}\deq \frac{1}{\sqrt{N}}u_{ij}(f).
	\eeqs
	In other words, $\caU(f)$ is an $(N\times N)$ random matrix whose entries are i.i.d. with the same law as $N^{-1/2}u(f)$ for each $f$. Then we have
	\beqs
		\expct{U(f,A)\ol{U(g,B)}}=\brkt{f,g}_{\wh{L}^{2}(\partial\bbD)}\brkt{AB\adj},\quad \expct{U(f,A)U(g,B)}=\begin{cases}
			0 & \text{if $\chi$ is complex},\\
			\brkt{f,g\adj}_{\wh{L}^{2}(\partial\bbD)}\brkt{AB^{\intercal}} & \text{if $\chi$ is real}.
		\end{cases}
	\eeqs
	Therefore, as in \eqref{eq:nonf_field}, we find that
	\beq\label{eq:func_field}
		P_{1}U(f\otimes A)\deq U(P_{1}(f\otimes A))
	\eeq
	has the same covariance as the functional part of \eqref{eq:cov}, where $P_{1}$ denotes the orthogonal projection onto $\caH^{2}\otimes\{A\in M_{N}(\C):\brkt{A}=0\}$. Combining \eqref{eq:nonf_field} and \eqref{eq:func_field}, we can express $\xi$ as the sum of two Gaussian functionals on $(H_{0}^{1}(\C)\cap\caH^{2})\otimes M_{N}(\C)$;
	\beqs
		\xi=\frac{1}{\sqrt{4\pi}}(P_{0}h\otimes I)+P_{1}U.
	\eeqs

\section{Outline of the proof}\label{sec:outline}
	In the rest of the paper, we often omit $N$ to write, for example, $L\equiv L_{N}$, but every quantity should be considered $N$-dependent unless otherwise specified. Also we will focus on the case when $\chi$ is complex, and present how to modify the proof for the real case in Section \ref{sec:real}.

	One of the major obstacles in studying non-Hermitian matrices is that they have complex spectra so that the resolvent no longer has a regularizing effect. A widely used technique to circumvent this problem is Hermitization, whose precise definition is as follows.
	\begin{defn}\label{defn:W}
		For $z\in\C$, we define $W_{z}\in M_{2N}(\C)$ to be the Hermitization of $X-z$, that is,
		\beqs
		\qquad W_{z}\deq\begin{pmatrix}0 & X-z \\ (X-z)\adj & 0\end{pmatrix},
		\eeqs
		and we abbreviate $W\equiv W_{0}$. We further denote $G_{z}(w)\deq(W_{z}-w)^{-1}$ for $w\in\C_{+}$ to be the resolvent of $W_{z}$.
	\end{defn}
	It follows from the Schur complement formula that the resolvent $G_{z}(w)$ can be written in the following block form;
	\begin{align}\label{eq:G_block}
		G_{z}(w)
		&=\begin{pmatrix} 
			w\left((X-z)(X-z)\adj-w^{2}\right)^{-1} & \left((X-z)(X-z)\adj-w^{2}\right)^{-1}(X-z) \\
			(X-z)\adj\left((X-z)(X-z)\adj-w^{2}\right)^{-1} & w\left((X-z)\adj(X-z)-w^{2}\right)^{-1}
		\end{pmatrix}.
	\end{align}
	An immediate consequence of \eqref{eq:G_block} is that $(G_{z}(\ii\eta)^{[12]})^{\adj}=G_{z}(\ii\eta)^{[21]}$.
	
	Note that $W_{z}$ is a Hermitian random matrix with independent entries for which we have more options to approach, and its spectral properties have been studied extensively. In particular, our proof relies on the local law for $W_{z}$ established in \cite{Alt-Erdos-Kruger-Nemish2019}. We remark that the local law for a slightly different Hermitization was proved earlier in \cite{Bourgade-Yau-Yin2014}. In order to rigorously state the local law, we define the deterministic approximation to $G_{z}(\ii\eta)$. For each $\eta>0$, we define $M_{z}(\ii\eta)\in M_{2N}(\C)$ to be the unique solution $M$ of 
	\beq\label{eq:Dyson_M}
	-M\caS[M]=\ii\eta M+M(zI^{\{12\}}+\ol{z}I^{\{21\}})+I,\qquad \caS[P]\deq\begin{pmatrix} \brkt{P^{[22]}} & 0 \\ 0 & \brkt{P^{[11]}}\end{pmatrix},\quad \forall P\in M_{2N}(\C),
	\eeq
	with positive definite imaginary part $\im M=\frac{1}{2\ii}(M-M\adj)$. The equation \eqref{eq:Dyson_M} is an example of \emph{matrix Dyson equation}, so that the existence and uniqueness of its solution follow from \cite{Helton-Rashidi-Speicher2007}. It easy to check from \eqref{eq:Dyson_M} that $M_{z}(\ii\eta)$ is the block constant matrix
	\beqs
	M_{z}(\ii\eta)=\begin{pmatrix}
		m_{z}(\ii\eta) & -zu_{z}(\ii\eta)	\\
		-\ol{z}u_{z}(\ii\eta) & m_{z}(\ii\eta)
	\end{pmatrix},
	\eeqs
	where $m_{z}(\ii\eta)$ and $u_{z}(\ii\eta)$ satisfy
	\beqs
	-\frac{1}{m_{z}(\ii\eta)}=\ii\eta+m_{z}(\ii\eta)-\frac{\absv{z}^{2}}{\ii\eta+m_{z}(\ii\eta)},\quad \im m_{z}(\ii\eta)>0,\qquad 
	u_{z}(\ii\eta)=\frac{m_{z}(\ii\eta)}{\ii\eta+m_{z}(\ii\eta)}.
	\eeqs
	Finally, we recall from \cite[Lemma 3.3]{Alt-Erdos-Kruger2021} that $\norm{M_{z}(\ii\eta)}$ is uniformly bounded in $z$ and $\eta$, that is,
	\beq\label{eq:norm_M}
		\norm{M_{z}(\ii\eta)}+\absv{m_{z}(\ii\eta)}+\absv{u_{z}(\ii\eta)}\lesssim 1.
	\eeq
	
	With these notations, the local law for $W_{z}$ is stated as follows. For simplicity, we here present it only in the relevant regime, that is, outside the spectrum:
	\begin{lem}\cite[Lemma B.1]{Alt-Erdos-Kruger-Nemish2019}\label{lem:ll}
		Let $\delta>0$ be fixed and $A$ be a deterministic $(N\times N)$ matrix with $\norm{A}\leq 1$. Then the following hold uniformly over $\absv{z}^{2}\geq1+\delta$ and $\eta\in(0,1]$;
		\begin{align*}
			\max_{i,j\in\bbrktt{2N}}\absv{(G_{z}(\ii\eta)-M_{z}(\ii\eta))_{ij}}&\prec \frac{1}{\sqrt{N}},\\
			\max_{k,l\in\bbrktt{2}}\Absv{\brkt{A(G_{z}(\ii\eta)-M_{z}(\ii\eta))^{[kl]}}}&\prec \frac{1}{N}.
		\end{align*}
	\end{lem}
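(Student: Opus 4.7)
The plan is to establish the local law via the matrix Dyson equation (MDE) strategy, specialized to the block Hermitization $W_z$ and exploiting that we work only in the regime $\absv{z}^{2}\geq 1+\delta$, which sits strictly outside the limiting spectrum of $W_z$. The key structural advantage of this regime is that the stability operator associated to \eqref{eq:Dyson_M} is uniformly invertible, which is precisely what allows the bounds to hold for all $\eta\in(0,1]$ with no lower bound on $\eta$, in contrast to local laws inside the spectrum which typically require $\eta\gg 1/N$.

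First I would analyze the stability operator $\mathcal{L}[R]\deq R+M\caS[R]M$ associated to \eqref{eq:Dyson_M}. The bound \eqref{eq:norm_M} together with a standard analysis of the MDE (in the spirit of \cite{Alt-Erdos-Kruger2018,Alt-Erdos-Kruger2021}) gives $\norm{\mathcal{L}^{-1}}\lesssim 1$ uniformly on $\{\absv{z}^{2}\geq 1+\delta,\,\eta\in(0,1]\}$. Second, writing $(W_z-\ii\eta)G=I$ and comparing with \eqref{eq:Dyson_M}, I would derive the perturbed MDE
\[
G-M=-M\,\caS[G-M]\,G+M\,\mathcal{E},
\]
where $\mathcal{E}$ collects the genuinely stochastic part of the self-energy after subtracting its conditional mean. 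A cumulant expansion in the entries of $X$, available because $\chi$ has moments of all orders, would yield $\absv{\mathcal{E}_{ij}}\prec N^{-1/2}$ via standard concentration for quadratic forms in independent entries. Combined with the stability of $\mathcal{L}$, this produces the entrywise bound $\max_{ij}\absv{(G-M)_{ij}}\prec N^{-1/2}$ via a bootstrap argument that descends from large $\eta$ (where everything is trivially controlled by $1/\eta$) to small $\eta$, while staying outside the spectrum so that no singularity in $\mathcal{L}^{-1}$ is encountered.

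The bulk of the work lies in the improved averaged bound $\absv{\brkt{A(G-M)^{[kl]}}}\prec N^{-1}$, which requires an extra factor of $N^{-1/2}$ beyond the isotropic estimate. This is the step I expect to be the main obstacle: the gain comes from a \emph{fluctuation averaging} mechanism in which the leading stochastic contribution to $\mathcal{E}$ cancels upon tracing against the deterministic $A$, so that only genuinely second-order terms survive with variance of order $N^{-2}$. Concretely, I would apply the cumulant expansion directly to $\brkt{A(G-M)^{[kl]}}$, use the entrywise bound from the previous step to control intermediate resolvent factors, and carefully track the $2\times 2$ block structure of $G$ and $M$ so that contractions with the off-diagonal blocks $zI^{\{12\}}+\ol{z}I^{\{21\}}$ from \eqref{eq:Dyson_M} are kept in order. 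Once this averaging gain is in place, both claims of the lemma follow uniformly in $z$ and $\eta$.
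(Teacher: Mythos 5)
You should first note that the paper does not prove this lemma at all: it is imported verbatim, with citation, from \cite[Lemma B.1]{Alt-Erdos-Kruger-Nemish2019}, and the only fair comparison is with the proof given there. Your outline follows the same general MDE route as that reference (stability of the Dyson equation outside the disk, cumulant/self-consistent error analysis, an averaged improvement), so the strategy is not wrong. The problem is that, as written, it is a programme rather than a proof: each of its three pillars is asserted, not established. The uniform bound $\norm{\mathcal{L}^{-1}}\lesssim 1$ for $\absv{z}^{2}\geq 1+\delta$ requires an actual computation with the explicit block-constant structure of $M_{z}$ and $\caS$ (it is not a consequence of \eqref{eq:norm_M} alone); the bound $\absv{\mathcal{E}_{ij}}\prec N^{-1/2}$ is not a one-shot concentration estimate for quadratic forms, since the error term $WG+\caS[G]G$ involves $G$ itself and must be handled by the self-improving/bootstrap machinery you only allude to; and the fluctuation-averaging gain from $N^{-1/2}$ to $N^{-1}$, which you yourself identify as the main obstacle, is precisely the content of the averaged law and is left entirely unexecuted.

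There is also a concrete missing idea in the uniformity claim over $\eta\in(0,1]$. A bootstrap that descends from large $\eta$ using stability and stochastic continuity reaches, in the standard way, only scales $\eta\gtrsim N^{-1+\epsilon}$; the uniform invertibility of the stability operator by itself does not control $\norm{G_{z}(\ii\eta)}$ as $\eta\to 0$, because a single eigenvalue of $W_{z}$ near $0$ would make $G$ blow up even though $M$ stays bounded. To cover all $\eta\in(0,1]$ — which is exactly what this paper needs, since it works at $\eta=N^{-2}$ — one must add the step that the local law at some scale $\eta_{0}\sim N^{-1+\epsilon}$ forces a spectral gap of order one for $W_{z}$ around $0$ (equivalently a lower bound on the smallest singular value of $X-z$, cf. Lemma \ref{lem:sing}), after which $G_{z}(\ii\eta)$ for $\eta<\eta_{0}$ is compared deterministically to $G_{z}(\ii\eta_{0})$ using $\norm{G_{z}(\ii\eta)}\lesssim 1$ on that event. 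Your sketch says the bounds hold for all $\eta$ ``because no singularity in $\mathcal{L}^{-1}$ is encountered,'' which conflates stability of the deterministic equation with control of the random resolvent; without the gap argument this step fails.
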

	\begin{rem}
		In principle, the  \emph{canonical}  definition of the operator $\caS$
		 in the MDE, given by $P\mapsto \expct{WPW}$,  slightly differs for the real and the complex
		  symmetry class. 
		   When $W$ is the Hermitization of a complex i.i.d. matrix, then  $\expct{WPW}$
		   is given by \eqref{eq:Dyson_M}. However, when 
		   $W$ is the Hermitization of a real i.i.d. matrix $X$, we find that the operator $\wt{\caS}[\cdot]\deq\expct{W\cdot W}$ is given by
		\beq\label{eq:tildeS}
		\wt{\caS}[P]=\begin{pmatrix}
			\brkt{P^{[22]}} & N^{-1}{P^{[21]}}^{\intercal} \\
			N^{-1}{P^{[12]}}^{\intercal} & \brkt{P^{{11]}}}
		\end{pmatrix},
		\eeq
		i.e. it has a tiny off-diagonal part. This would lead to the unique solution of the MDE with $\wt{\caS}$ which is not in a block diagonal form and its entries can mildly depend on $N$ unlike $M_{z}$. To avoid such complication we keep the same matrix Dyson equation with $\caS$ defined in \eqref{eq:Dyson_M}, and its block diagonal solution $M_{z}$ for both the real and complex cases. This fact will play an important role in Section \ref{sec:real} when we prove the main result for real $X$. Further remarks on local laws for non-Hermitian matrices with real entries can be found in \cite[Remark 5.4]{Alt-Erdos-Kruger2021}.
	\end{rem}

	Typically one writes the logarithmic potential of the averaged eigenvalue distribution of $X$ in terms of $G_{z}(\ii\eta)$, via Girko's formula in \cite{Girko1984}. Alternatively, being outside of the spectrum, we can take a rather direct approach to extract the eigenvalues of $X$ from $W_{z}$. We will use that whenever $(X-z)(X-z)\adj$ is invertible, it follows that
	\beq\label{eq:eta_limit}
	\lim_{\eta\to0}G_{z}^{[21]}(\ii\eta)=(X-z)\adj\lim_{\eta\to0}\frac{1}{(X-z)(X-z)\adj+\eta^{2}} = \frac{1}{(X-z)}.
	\eeq
	In fact, on the event $\Omega_{\delta}(\kappa)$ we can quantify \eqref{eq:eta_limit} with a concrete rate of convergence in terms of $\eta$. Recall from Lemma \ref{lem:sing} that the smallest singular value of $(X-z)$ is bounded from below by $\kappa$ on $\Omega_{\delta}(\kappa)$, which has very high probability. Since the eigenvalues of $W_{z}$ have the same modulus as the singular values of $(X-z)$, we have
	\beq\label{eq:normG}
		\norm{G_{z}(\ii\eta)}\leq (\norm{((X-z)(X-z)\adj)^{-1}}+\eta^{2})^{-1/2}\leq \kappa^{-1}\qquad \text{on }\,\Omega_{\delta}(\kappa),
	\eeq
	uniformly over $\absv{z}\geq 1+\delta/2$ and $\eta>0$. Using \eqref{eq:G_block}, this further implies
	\beqs
		\Norm{G^{[21]}_{z}(\ii\eta)-(X-z)^{-1}}
		=\Norm{\left(\frac{\eta^{2}}{((X-z)(X-z)\adj+\eta^{2})(X-z)}\right)}\leq \kappa^{-3}\eta^{2}
	\eeqs
	uniformly over $\absv{z}\geq1+\delta/2$, so that on $\Omega_{\delta}(\kappa)$ we have, for any $A\in M_{N}(\C)$ with $\norm{A}\leq 1$, that
	\beq\label{eq:G_approx}
		\absv{\Tr ((X-z)^{-1}A)- \Tr G^{[21]}_{z}(\ii\eta)A}	\leq \kappa^{-3}N\eta^{2}
	\eeq
	
	As a consequence of Lemma \ref{lem:sing} and \eqref{eq:G_approx}, we can prove that the difference between $\ctr{L(f,A)}$ and the contour integral of $\ctr{\Tr G^{[21]}_{z}(\ii\eta)A}$ is negligible when $\eta$ is small. We here emphasize that the latter is not restricted on the event $\Omega_{\delta}(\kappa)$ unlike the former.
	\begin{prop}\label{prop:toG}
		Let $\eta=N^{-2}$ and suppose that the assumptions of Theorem \ref{thm:main} hold true. Then we have
		\beq\label{eq:Hermite}
			(\ctr{L_{N}(f_{i},A_{i})})_{i\in\bbrktt{n}}\overset{m}{=}-\left(\frac{1}{2\pi\ii}\oint_{\gamma}f_{i}(z)\ctr{\Tr G_{z_{i}}(\ii\eta)^{[21]}A_{i}}\dd z\right)_{i\in\bbrktt{n}}	+O_{\rmm}(N^{-3+\epsilon}),
		\eeq
		where $\gamma$ is the positively oriented circle $\{z:\absv{z}=1+\delta/2\}$. 
	\end{prop}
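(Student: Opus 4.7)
The strategy is to apply Cauchy's holomorphic functional calculus on $\Omega_{\delta}(\kappa)$ to express $\Tr f_{i}(X)A_{i}$ as a contour integral of $\Tr(X-z)^{-1}A_{i}$, then invoke \eqref{eq:G_approx} with $\eta=N^{-2}$ to replace $(X-z)^{-1}$ by $G_{z}(\ii\eta)^{[21]}$, and finally remove both the indicator $\lone_{\Omega_{\delta}(\kappa)}$ and any centering asymmetry using the super-polynomial probability bound from Lemma \ref{lem:sing}. On $\Omega_{\delta}(\kappa)$ the contour $\gamma=\{|z|=1+\delta/2\}$ encloses $\sigma(X)$ and Cauchy's formula combined with \eqref{eq:G_approx} (which costs $\kappa^{-3}N\eta^{2}=\rO(N^{-3})$ pointwise) gives, uniformly on $\Omega_{\delta}(\kappa)$,
\beqs
L_{N}(f_{i},A_{i})=\caX_{i}+\rO(N^{-3}),\qquad \caX_{i}\deq-\frac{1}{2\pi\ii}\oint_{\gamma}f_{i}(z)\Tr G_{z}(\ii\eta)^{[21]}A_{i}\dd z.
\eeqs

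To upgrade this to a difference on the whole sample space, I would use the decomposition
\beqs
L_{N}(f_{i},A_{i})-\caX_{i}=-\lone_{\Omega_{\delta}(\kappa)^{c}}\caX_{i}+\rO(\lone_{\Omega_{\delta}(\kappa)}N^{-3}).
\eeqs
The crude deterministic bound $\norm{G_{z}(\ii\eta)}\leq\eta^{-1}=N^{2}$ gives $|\caX_{i}|\leq CN^{3}$ pointwise, and Lemma \ref{lem:sing} yields $\prob{\Omega_{\delta}(\kappa)^{c}}\leq N^{-D}$ for any $D>0$. Combining these two facts shows that for every fixed $k$, both $|\expct{L_{N}(f_{i},A_{i})-\caX_{i}}|$ and $\|L_{N}(f_{i},A_{i})-\caX_{i}\|_{L^{k}}$ are $\rO(N^{-3})$, hence the same holds for $\|\ctr{L_{N}(f_{i},A_{i})}-\ctr{\caX_{i}}\|_{L^{k}}$.

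To control the $L^{k}$-norm of $\ctr{\caX_{i}}$ I would invoke the local law Lemma \ref{lem:ll}, which applies on our contour since $|z|^{2}=(1+\delta/2)^{2}\geq 1+\delta$ lies strictly outside the limiting support of $W_{z}$, and is valid for all $\eta\in(0,1]$, in particular for $\eta=N^{-2}$. It yields $|\Tr(G_{z}(\ii\eta)-M_{z}(\ii\eta))^{[21]}A_{i}|\prec 1$ uniformly in $z\in\gamma$, which combined with the polynomial deterministic bound $CN^{3}$ produces $\|\ctr{\Tr G_{z}(\ii\eta)^{[21]}A_{i}}\|_{L^{k}}\leq C_{k}N^{\epsilon}$ uniformly in $z\in\gamma$, so that Minkowski's integral inequality gives $\|\ctr{\caX_{i}}\|_{L^{k}}\leq C_{k}N^{\epsilon}$. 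For a fixed $*$-monomial $p$ of degree $k$ I would then telescope
\beqs
p(\ctr{L})-p(\ctr{\caX})=\sum_{t=1}^{k}\Big(\prod_{s<t}\ctr{L_{i_{s}}}^{*_{s}}\Big)(\ctr{L_{i_{t}}}^{*_{t}}-\ctr{\caX_{i_{t}}}^{*_{t}})\Big(\prod_{s>t}\ctr{\caX_{i_{s}}}^{*_{s}}\Big)
\eeqs
and apply Hölder with exponent $k$ to each summand, which bounds its expectation by $C_{k}N^{-3}\cdot N^{(k-1)\epsilon}$. Choosing $\epsilon$ small enough (depending on the fixed $k$) yields the claimed $O_{\rmm}(N^{-3+\epsilon})$.

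The main technical care goes into absorbing the polynomial blow-up $\norm{G_{z}(\ii\eta)}\leq N^{2}$ off $\Omega_{\delta}(\kappa)$; this requires the super-polynomial tail bound from Lemma \ref{lem:sing} together with the availability of the local law at the tiny scale $\eta=N^{-2}$, which is possible precisely because $\gamma$ sits at positive distance from the limiting spectrum $\ol{\bbD}$.
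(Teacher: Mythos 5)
Your proposal is correct and follows essentially the same route as the paper: Cauchy's formula plus \eqref{eq:G_approx} on $\Omega_{\delta}(\kappa)$, the crude bound $\norm{G_z(\ii\eta)}\leq\eta^{-1}$ together with Lemma \ref{lem:sing} to discard the complement event, the local law Lemma \ref{lem:ll} for a priori $L^{k}$-bounds on the centered resolvent statistics, and a telescoping/H\"older argument to pass to joint $*$-moments. The only difference is organizational (you strip the indicator from the contour integral before comparing moments, while the paper compares with the indicator-restricted integral first and removes it afterwards), which does not change the substance of the argument.
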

	\begin{proof}
	For simplicity, we prove \eqref{eq:Hermite} when $n=1$ since the proof for multi-dimensional vectors is identical except for a few minor algebraic modification. Also, we fix the parameters $\delta,\kappa>0$ and will not keep track of their effect on convergence rates.
	
	Taking the Cauchy integral of \eqref{eq:G_approx} over the path $\gamma=\{\absv{z}=1+\delta/2\}$ leads to
	\beq\label{eq:mean_int}
		L(f,A)=\lone_{\Omega_{\delta}(\kappa)}\frac{1}{2\pi\ii}\oint_{\gamma}f(z)\Tr((X-z)^{-1}A)\dd z
		=\lone_{\Omega_{\delta}(\kappa)}\frac{1}{2\pi\ii}\oint_{\gamma}f(z)\Tr(G_{z}(\ii\eta)^{[21]}A)\dd z +O(N\eta^{2}),
	\eeq
	so that the same equality holds with both sides centered. We also note from Lemma \ref{lem:ll} and $\norm{M_{z}(\ii\eta)}\lesssim 1$ that $\ctr{\lone_{\Omega_{\delta}(\kappa)}\Tr G_{z}(\ii\eta)^{[21]}A}\prec 1,$ 
	which, together with \eqref{eq:normG}, gives for any $p,\epsilon,D>0$ that
	\beq\label{eq:finite_mom}
		\expct{\absv{\ctr{\lone_{\Omega_{\delta}(\kappa)}\Tr G_{z}(\ii\eta)^{[21]}A}}^{p}}\leq N^{\epsilon}+N^{p-D}.
	\eeq
	By our choice of $\eta=N^{-2}$ we have $N\eta^{2}=N^{-3}$, so that from $\absv{x^{p}-y^{p}}\leq\sum_{k=1}^{p}\binom{p}{k}\absv{x-y}^{k}\absv{x}^{p-k}$ we get
	\begin{multline*}
		\Absv{\Expct{\ctr{L(f,A)}^{p}-\left(\frac{1}{2\pi\ii}\oint_{\gamma}f(z)\ctr{\lone_{\Omega_{\delta}(\kappa)}\Tr G_{z}(\ii\eta)^{[21]}A}\dd z\right)^{p}}} \\
		\lesssim \eta^{2}\int_{\gamma}\expct{\absv{\ctr{\lone_{\Omega_{\delta}(\kappa)}\Tr G_{z}(\ii\eta)^{[21]}A}}^{p}}\absv{\dd z}=O(N^{-3+\epsilon}),
	\end{multline*}
	where we applied H\H{o}lder's inequality to interchange the contour integral with the $L^{p}$ norm. Similar estimate holds for generic $*$-monomials of the form $L^{p}\ol{L}^{q}$. In other words, we have shown that
	\beq\label{eq:L_iint_1}
		\ctr{L(f,A)}\overset{m}{=} -\frac{1}{2\pi\ii}\oint_{\gamma}f(z)\ctr{\lone_{\Omega_{\delta}(\kappa)}\Tr G_{z}(\ii\eta)^{[21]}A}\dd z+ O_{\rmm}(N^{-3+\epsilon})
	\eeq
	for all $\epsilon>0$.
	
	On the other hand, by Lemma \ref{lem:sing} we have for all $p>0$ that
	\beq\label{eq:event_del}
		\expct{\absv{\lone_{\Omega_{\delta}(\kappa)^{c}}\Tr G_{z}(\ii\eta)^{[21]}A}^{p}}=O(N^{p-D}\eta^{-p})=O(N^{3p-D}),
	\eeq
	where we used the trivial bound $\norm{G}\leq \eta^{-1}$. Enlarging $D$ depending on $p$ leads to
	\beq\label{eq:event_del_conc}
		\lone_{\Omega_{\delta}(\kappa)^{c}}\frac{1}{2\pi\ii}\oint_{\gamma}f(z)\Tr (G_{z}(\ii\eta)^{[21]}A)\dd z \overset{m}{=}O_{\rmm}(N^{-D})
	\eeq
	for any fixed $D$. Then \eqref{eq:event_del_conc} together with \eqref{eq:finite_mom} gives, after a binomial expansion, that
	\beq\label{eq:L_iint_2}
		\frac{1}{2\pi\ii}\oint_{\gamma}\ctr{\lone_{\Omega_{\delta}(\kappa)}\Tr G_{z}(\ii\eta)^{[21]}A}\dd z
		\overset{m}{=}\frac{1}{2\pi\ii}\oint_{\gamma}\ctr{\Tr G_{z}(\ii\eta)^{[21]}A}\dd z+O_{\rmm}(N^{-D})
	\eeq
	for all fixed $D>0$. Combining \eqref{eq:L_iint_1} and \eqref{eq:L_iint_2}, we conclude \eqref{eq:Hermite} for any fixed $\epsilon>0$
	\end{proof}
	\begin{rem}
		As easily seen from the proof, our choice of $\eta=N^{-2}$ in Proposition \ref{prop:toG} is purely cosmetic. In fact, the same proof applies whenever $\log \eta/\log N\in (-\infty,-1/2)$, in which case the upper bound in \eqref{eq:Hermite} becomes $O_{\rmm}(N^{1+\epsilon}\eta^{2})$.
	\end{rem}
	In the following sections, we will prove that for $\eta=N^{-2}$, the random vector
	\beqs
		-\left(\frac{1}{2\pi\ii}\oint_{\gamma}f_{i}(z)\Tr(G_{z}(\ii\eta)^{[21]}A_{i})\dd z\right)_{i\in\bbrktt{n}}
	\eeqs
	has asymptotically the same moment as the Gaussian vector $\xi(f_{i},A_{i})$, which would imply Theorem \ref{thm:main} via Proposition \ref{prop:toG}. In this regard, for the rest of the paper we fix $\eta=N^{-2}$ and we suppress the spectral parameter to write $G_{z}\equiv G_{z}(\ii\eta)$, $M_{z}\equiv M_{z}(\ii\eta)$ et cetera. We recall from \cite[Eq. (3.7)]{Cipolloni-Erdos-Schroder2019arXiv} that the deterministic matrix $M_{z}$ has the following asymptotic expansions in $\eta$ for $\absv{z}>1$;
	\beq\label{eq:asymp_M}
		\Norm{M_{z}-\begin{pmatrix}
			\ii\eta(\absv{z}^{2}-1)^{-1} & -\ol{z}^{-1} \\ -{z}^{-1} & \ii\eta(\absv{z}^{2}-1)^{-1} \end{pmatrix}}=O(\eta^{2}).
	\eeq

	For $\eta$ fixed to be $N^{-2}$, we state our main technical result, the central limit theorem for the resolvents $(\Tr G_{z_{i}}^{[21]}A_{i})_{i\in\bbrktt{n}}$ as follows. Its proof for complex and real $\chi$ is postponed to Sections \ref{sec:prf_CLT_G} and $\ref{sec:real}$, respectively.
	\begin{prop}\label{prop:CLT_G}
		Let $\epsilon,\delta>0$ be fixed and take a finite collection of deterministic matrices $A_{1},\cdots,A_{n}\in M_{N}(\C)$ with $\norm{A_{i}}\leq 1$ and $z_{1},\cdots,z_{n}\in\bbD_{1+\delta/2}^{c}$. Then, for $\eta=N^{-2}$, we have
		\begin{align}\label{eq:CLT_G}
			(\ctr{\Tr G_{z_{i}}^{[21]}A_{i}})_{i\in\bbrktt{n}}
			&\overset{m}{=} (\zeta(z_{i},A_{i}))_{i\in\bbrktt{n}}+O_{\rmm}(N^{-1/2+\epsilon}),\\
			\label{eq:mean_asymp_G}
			\expct{\Tr G_{z_{i}}^{[21]}A_{i}}+N\brkt{A_{i}}\frac{1}{z_{i}}&=
			\begin{cases}
				O(N^{-1/2+\epsilon}) & \text{if $\chi$ is complex},\\
				-\dfrac{\brkt{A_{i}}}{z_{i}(z_{i}^{2}-1)}+O(N^{-1/2+\epsilon}) & \text{if $\chi$ is real},
			\end{cases}	
		\end{align}
		where $\{\zeta(z,A):\absv{z}>1,A\in M_{N}(\C)\}$ is a centered Gaussian process with covariances
		\beqs
		\begin{split}
			\expct{\zeta(z,A)\ol{\zeta(w,B)}}&=\brkt{A}\ol{\brkt{B}}\frac{1}{(1-z\ol{w})^{2}}+\brkt{\mr{A}\mr{B}\adj}\frac{1}{z\ol{w}(z\ol{w}-1)},\\ \expct{\zeta(z,A)\zeta(w,B)}&=\begin{cases}
				0 & \text{if $\chi$ is complex,}\\
				\brkt{A}\brkt{B}\dfrac{1}{(1-zw)^{2}}+\brkt{\mr{A}\mr{B}^{\intercal}}\dfrac{1}{zw(zw-1)} & \text{if $\chi$ is real}.
			\end{cases}
		\end{split}
		\eeqs
		Furthermore, the convergences in \eqref{eq:CLT_G} and \eqref{eq:mean_asymp_G} are uniform over $z_{1},\cdots,z_{n}\in\bbD_{1+\delta/2}^{c}$.
	\end{prop}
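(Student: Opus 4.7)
The plan is to establish \eqref{eq:CLT_G} by the method of moments: I identify the mixed moments of the random vector $(\ctr{\Tr G_{z_i}^{[21]} A_i})_{i \in \bbrktt{n}}$ up to error $O(N^{-1/2+\epsilon})$ with those of a Gaussian vector having the stated covariance, and then appeal to Wick's theorem. The mean asymptotic \eqref{eq:mean_asymp_G} will be handled separately by refining the local law for $\Tr G_z^{[21]} A$ through one step of cumulant expansion. The crucial geometric feature that allows us to operate at $\eta = N^{-2}$, far below the usual optimal threshold $\eta \sim N^{-1}$, is that every spectral parameter lies outside the spectrum, so by \eqref{eq:normG} the resolvent norm is bounded by $\kappa^{-1}$ on $\Omega_\delta(\kappa)$ and the deterministic approximation $M_z$ is bounded uniformly in $\eta$ by \eqref{eq:norm_M}.

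For the mean \eqref{eq:mean_asymp_G}, I would write $\expct{\Tr G_z^{[21]} A} = \Tr M_z^{[21]} A + \expct{\Tr (G_z - M_z)^{[21]} A}$, extract $-N\brkt{A}/z$ from the leading term via \eqref{eq:asymp_M}, and then perform a single cumulant expansion on the error. In the complex case, $\expct{\chi^2} = 0$ suppresses the next order, while in the real case a residual $-\brkt{A}/(z(z^2-1))$ emerges from the second real cumulant of $\chi$ evaluated against $M_z$ using the explicit expansion \eqref{eq:asymp_M} and the block-off-diagonal structure.

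For the joint moments, I apply the cumulant expansion of $\chi$ to a single factor $\Tr G_{z_i}^{[21]} A_i$ after expressing it through the resolvent identity $W_z G_z = I + \ii\eta G_z$. Each pairing by a second cumulant, after resummation over indices, produces normalized traces $\brkt{G_{z_i}^{[kl]} B_i G_{z_j}^{[k'l']} B_j}$ of two-resolvent products with deterministic insertions $B$. These will be replaced by deterministic approximates through the two-resolvent local laws of \cite{Cipolloni-Erdos-Schroder2019arXiv}; evaluating the resulting deterministic quantities with $M_z$ given by \eqref{eq:asymp_M} at $\eta = N^{-2}$ and tracking the block indices gives the kernels $(1 - z\ol{w})^{-2}$ from the tracial channel (pairing of the $\brkt{A}I$ parts) and $[z\ol{w}(z\ol{w}-1)]^{-1}$ from the traceless channel (pairing of $\mr{A}$'s). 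Third and higher cumulants are absorbed as $O_\prec(N^{-1/2})$ through the Ward identity and $\|G_z\| \lesssim 1$; iterating this procedure on the remaining factors yields the Wick recursion up to the claimed error.

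The main obstacle will be to rigorously justify the two-resolvent deterministic approximations in the extreme regime $\eta = N^{-2}$, since the standard formulations are written for $\eta \gtrsim N^{-1}$. I would circumvent this by rerunning the two-resolvent argument with $M_z$-bounds uniform in $\eta$, which hold throughout $\bbD_{1+\delta/2}^c$ by \eqref{eq:norm_M}, and by noting that at our small $\eta$ the matrix $M_z$ is effectively block-constant with entries dictated by \eqref{eq:asymp_M}, so the algebraic form of the limiting kernel simplifies dramatically to the Szegő-type expressions that appear in the covariance. A secondary difficulty is the careful bookkeeping of block indices and complex conjugations that distinguishes $\Tr G_z^{[21]} A$ from its conjugate $\Tr G_z^{[12]} A\adj$, which must be done to isolate precisely the $(21)$–$(12)$ pairing responsible for the claimed structure. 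The real case, deferred to Section \ref{sec:real}, brings in the additional second-moment tensor via \eqref{eq:tildeS}: the discrepancy between $\caS$ and $\wt{\caS}$ contributes an extra pairing $\expct{\zeta(z,A)\zeta(w,B)}$ whose kernel is computed by the same mechanism with the roles of $G^{[21]}$ and $G^{[12]}$ exchanged.
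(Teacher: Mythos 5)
Your outline coincides with the paper's own strategy: the moment method with a Wick recursion generated by the resolvent identity $W_zG_z=I+\ii\eta G_z$ plus cumulant expansion, the two-resolvent local laws of \cite{Cipolloni-Erdos-Schroder2019arXiv} strengthened at $\eta=N^{-2}$ outside $\ol{\bbD}$ via the norm bound \eqref{eq:normG} and the bounded inverse of the stability operator \eqref{eq:beta_lb}, the careful separation of $\Tr G_z^{[21]}A$ from its conjugate, and the real case handled through the real cumulant expansion. Two steps, however, are mis-stated in a way that hides genuine work. First, the second-cumulant cross pairings do not produce two-resolvent traces: differentiating another factor $\caX_i$ or $\caY_j$ and resumming against $(G_{z_1}^{[21]}A_1)_{ba}$ yields traces of \emph{three} resolvent factors, namely $\frX_i=\brkt{G_{z_i}^{[11]}A_iG_{z_i}^{[22]}G_{z_1}^{[21]}A_1}$ and $\frY_j=\brkt{G_{w_j}^{[12]}B_j\adj G_{w_j}^{[12]}G_{z_1}^{[21]}A_1}$, to which Lemma \ref{lem:2Gll} does not apply directly. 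The paper must first expand these using the self-renormalization \eqref{eq:defn_renorm} and prove the variance bound \eqref{eq:3G_var} on underlined three-resolvent traces before the deterministic asymptotics \eqref{eq:2G}--\eqref{eq:2G_mat} can be inserted; this reduction is also precisely what shows $\expct{\absv{\frX_i}^{2}}=O(N^{-2+\epsilon})$, i.e.\ that the $\caX$--$\caX$ channel is negligible for complex $\chi$, which is the mechanism behind $\expct{\zeta(z,A)\zeta(w,B)}=0$. Likewise, the third and fourth cumulants cannot be dismissed with $\norm{G_z}\lesssim 1$ alone (after $\sum_{a,b}$ that only gives $O_{\prec}(N^{1/2})$, resp.\ $O_{\prec}(1)$); one needs the entrywise bound \eqref{eq:entry_prior} together with Cauchy--Schwarz in the entries of the $A_i$, as in the case analysis of Section \ref{sec:prf_comput2}.

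Second, your attribution of the real-case covariance $\expct{\zeta(z,A)\zeta(w,B)}$ to the discrepancy between $\caS$ and $\wt{\caS}$ is not how it arises: the paper shows $\norm{(\caS-\wt{\caS})[P]}\le N^{-1}\norm{P}$ produces only $O_{\prec}(N^{-1})$ corrections and deliberately keeps the block-constant $M_z$ of \eqref{eq:Dyson_M} in both symmetry classes. The extra pairing channel comes instead from the additional insertion $\Delta_{ba}^{\{21\}}$ in the real derivative of the resolvent hitting the other trace factors, which via $(G_z^{[12]})^{\intercal}=G_{\ol z}^{[21]}$ generates terms such as $\brkt{G_{z_1}^{[21]}A_1G_{\ol z_i}^{[12]}A_i^{\intercal}G_{\ol z_i}^{[12]}}$; these have the same form as $\frY_j$ and yield the kernels $1/(zw(zw-1))$ and $1/(1-zw)^2$. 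Similarly, the real shift $-\brkt{A}/(z(z^{2}-1))$ in \eqref{eq:mean_asymp_G} is not obtained by evaluating a cumulant against $M_z$ alone; it is the deterministic value of the two-resolvent trace $\brkt{(G_{z}A^{\{11\}}G_{\ol z})^{[22]}}\approx \brkt{A}/(z^{2}-1)$, again computed through Lemma \ref{lem:2Gll} and \eqref{eq:2G_mat}. With these corrections your plan matches the paper's proof.
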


	Now that we have collected all the necessary ingredients, we complete the proof of Theorem \ref{thm:main}.	
	\begin{proof}[Proof of Theorem \ref{thm:main}]
		First of all, by \eqref{eq:Hermite} we have
		\beqs
			(\ctr{L(f_{i},A_{i})})_{1\leq i\leq n}\overset{m}{=}\left(\frac{1}{2\pi \ii}\oint_{\gamma}f_{i}(z)\ctr{\Tr G_{z}^{[21]}A_{i}}\dd z\right)_{1\leq i\leq n}+O_{\rmm}(N^{-3+\epsilon}).
		\eeqs
		Then, for a fixed $*$-monomial $\prod_{i=1}^{k}x_{j_{i}}^{m_{i}}$ with a multiset $\{j_{1},\cdots,j_{k}\}$ of indices in $\bbrktt{1,n}$ and $m_{i}\in\{1,*\}$, we have
		\beq\label{eq:thm_prf}
			\oint_{\gamma}\cdots\oint_{\gamma}\Expct{\prod_{i=1}^{k}(f_{j_{i}}(z_{{i}})\ctr{\Tr G_{z_{{i}}}^{[21]}A_{j_{i}}})^{m_{i}}\dd (z_{{i}}^{m_{i}})}	
			=\oint_{\gamma}\cdots\oint_{\gamma}\Expct{\prod_{i=1}^{k}(f_{j_{i}}(z_{{i}})\zeta(z_{{i}},A_{j_{i}}))^{m_{i}}\dd (z_{{i}}^{m_{i}})}+O(N^{-3+\epsilon}),
		\eeq
		where we used that the estimate in \eqref{eq:CLT_G} is uniform over $z_{i}$'s in $\{z:\absv{z}\geq 1+\delta/2\}$. To calculate the right-hand side of \eqref{eq:thm_prf}, we use Wick's theorem and the fact that (see Eq. (1.7) in \cite{Rider-Silverstein2006} for a proof of the last identity)
		\beqs
		\begin{split}
			-\frac{1}{4\pi^{2}}\oint_{\gamma}\oint_{\gamma}\frac{f(z)\ol{g(w)}}{z\ol{w}(z\ol{w}-1)}\dd\ol{w}\dd z
			&=\frac{1}{2\pi}\int_{\partial\bbD}f(z)\ol{g(z)}\dd z-f(0)\ol{g(0)}=\brkt{f,g}_{\wh{L}^{2}(\partial \bbD)},
			\\
			-\frac{1}{4\pi^{2}}\oint_{\gamma}\oint_{\gamma}\frac{f(z)g(w)}{zw(zw-1)}\dd w\dd z
			&=\brkt{f,\ol{g}}_{\wh{L}^{2}(\partial\bbD)},
		\end{split}
		\eeqs
		Thus the covariance of contour integrals of $\zeta(\cdot,A)$ coincide with that of $\xi(f,A)$, hence combining with \eqref{eq:thm_prf} leads to
		\beqs
			(2\pi\ii)^{-k}\oint_{\gamma}\cdots\oint_{\gamma}\Expct{\prod_{i=1}^{k}(f(z_{i})\ctr{\Tr G_{z_{j_{i}}}^{[21]}A_{j_{i}}})^{m_{i}}\dd (z^{m_{i}})}
			=\Expct{\prod_{i=1}^{k}\xi(f_{j_{i}},A_{j_{i}})^{m_{i}}}+O(N^{-1/2+\epsilon}),
		\eeqs
		which implies \eqref{eq:main}. 
		
		In order to prove \eqref{eq:mean_asymp}, we use \eqref{eq:mean_int} and \eqref{eq:event_del} with $p=1$ to get
		\beqs
			\expct{L(f,A)}=\frac{1}{2\pi\ii}\oint_{\gamma}f(z)\expct{\Tr G_{z}^{[21]}A}\dd z +O(N^{-3}).
		\eeqs
		Therefore \eqref{eq:mean_asymp_G} implies \eqref{eq:mean_asymp} as desired. This concludes the proof of Theorem \ref{thm:main}.
	\end{proof}
	
	We conclude this section with remarks on Proposition \ref{prop:CLT_G}.
	\begin{rem}
		As in Proposition \ref{prop:toG}, the choice of $\eta=N^{-2}$ does not play a crucial role in the sense that the same proof applies to smaller $\eta$ as long as $\log\eta/\log N$ remains bounded; for example, we may take $\eta=N^{-100}$. However, taking smaller $\eta$ does not improve the rate of convergence in \eqref{eq:CLT_G} or \eqref{eq:mean_asymp_G}, in contrast to Proposition \ref{prop:toG}.
	\end{rem}

\section{Proof of Proposition \ref{prop:CLT_G}}\label{sec:prf_CLT_G}
	
	In this section, we prove that the joint moments of $\ctr{\Tr G_{z}^{[21]}A}$ converge to those of corresponding Gaussian variables, and the asymptotics \eqref{eq:mean_asymp_G} of the mean will be proved in Section \ref{sec:mean_prf}. 
	We first prove Proposition \ref{prop:CLT_G} when $\chi$ is complex, and the proof for real-valued $\chi$ will be given in Section \ref{sec:real}.
	
	For any $p,q\in\N$, we calculate the moment
	\beq\label{eq:j_mmnt_goal}
		\bbE\bigg[\prod_{i\in\bbrktt{p}}\caX_{i}\prod_{j\in\bbrktt{q}}\caY_{j}\bigg],
	\eeq
	where we defined
	\begin{align}\label{eq:defn_XY}
		&\caX_{i}=\ctr{\Tr G_{z_{i}}^{[21]}A_{i}},\quad i\in\bbrktt{p}, &
		&\caY_{j}=\ctr{\Tr (G_{w_{j}}^{[21]})\adj B_{j}\adj}=\ctr{\Tr G_{w_{j}}^{[12]}B_{j}\adj},\quad j\in\bbrktt{q},
	\end{align}
	and abbreviated $G_{z}\equiv G_{z}(\ii\eta)$. We further assume that there exist sets of indices $S\subset\bbrktt{p}$ and $T\subset\bbrktt{q}$ such that 
	\beqs
		\Tr A_{i}=0=\Tr B_{j}, \quad i\in S,\,j\in T,\qquad 
		A_{i}=I=B_{j},\quad i\in S^{c},\, j\in T^{c},
	\eeqs
	where $S^{c}=\bbrktt{p}\setminus S$ and $T^{c}=\bbrktt{q}\setminus T$.
	We remark that calculating all joint moments as in \eqref{eq:j_mmnt_goal} is equivalent to that of Proposition \ref{prop:CLT_G}, since for any $(N\times N)$ matrix $A$ we may decompose $\Tr G_{z}^{[21]}A$ as
	\beqs
		\Tr G^{[21]}A= \Tr G^{[21]}\mr{A}+\brkt{A}\Tr G^{[21]}
	\eeqs
	so that the first and second terms correspond respectively to the case $i\in S$ and $i\in S^{c}$. We further remark that it is necessary to separate the complex conjugates $\caY$ from $\caX$ in contrast to \cite{Cipolloni-Erdos-Schroder2019arXiv}, for $\ol{\Tr G^{[21]}A}$ no longer has the same form as $\Tr (G\adj)^{[21]} A$ since $(G^{[21]})\adj=(G\adj)^{[12]}=G^{[12]}$. To simplify the presentation, we abbreviate
	\begin{align*}
		&\caX\deq \prod_{i\in\bbrktt{p}}\caX_{i},&
		&\caX^{(i_{1},\cdots,i_{n})}\deq \prod_{\substack{i\in \bbrktt{p},\\i\neq i_{1},\cdots,i_{n}}}\caX_{i},
	\end{align*}
	and define $\caY$ similarly. With these notations, we have the following asymptotic Wick formula;
	\begin{prop}\label{prop:Wick}
		Let $\delta,\epsilon>0$ be fixed and suppose that $z_{i},w_{j}\in \bbD_{1+\delta}^{c}$ for all $i,j$. Then we have
		\beq\label{eq:Wick_result}
			\expct{\caX\caY}=\sum_{\substack{P\in\mathrm{Pair}(S,T) \\ Q\in\mathrm{Pair}(S^{c},T^{c})}}\prod_{(i,j)\in P}V^{\circ}(z_{i},w_{j})\brkt{A_{i}B_{j}\adj}\prod_{(i,j)\in Q}V(z_{i},w_{j})+O(N^{-1/2+\epsilon}),
		\eeq
		where $\mathrm{Pair}(I,J)$ for $I\subset\bbrktt{p}$ and $J\subset\bbrktt{q}$ denotes the set of perfect matchings from $I$ to $J$ and 
		\beq\label{eq:defn_V}
			V^{\circ}(z,w)=\frac{1}{z\ol{w}(z\ol{w}-1)},\qquad V(z,w)=\frac{1}{(1-z\ol{w})^{2}}.
		\eeq
		In particular, $\expct{\caX\caY}$ is nonzero only if $\absv{S}=\absv{T}$ and $\absv{S^{c}}=\absv{T^{c}}$, hence $p=q$.
	\end{prop}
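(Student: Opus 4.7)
My plan is to prove Proposition \ref{prop:Wick} by induction on the number of factors $n = p + q$ via a moment recursion derived from a cumulant expansion, in the spirit of \cite{Cipolloni-Erdos-Schroder2019arXiv,Cipolloni-Erdos-Schroder2020arXiv}. The base case $n=2$ identifies the two building-block covariances $V$ and $V^{\circ}$; the induction step then unfolds the asymptotic Wick pairings one at a time, with all non-pairing contributions estimated as $O(N^{-1/2+\epsilon})$.

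For the base case, I would compute $\expct{\caX_i \caY_j}$ directly. The starting point is the block identity
\begin{equation*}
(X-z)G_{z}^{[21]} = I + \ii\eta G_{z}^{[11]},
\end{equation*}
obtained from the $[11]$-block of $(W_z - \ii\eta)G_z = I$. This exposes an explicit $X$-factor inside $\Tr G_z^{[21]}A$, which I would open via the cumulant expansion $\expct{X_{ab}F} = N^{-1}\expct{\partial_{\bar X_{ab}}F} + \text{higher cumulants}$; the $\partial_{X_{ab}}$-derivative drops out in the complex case thanks to $\expct{\chi^{2}}=0$. The derivative $\partial_{\bar X_{ab}}G_{z} = -G_{z}E_{ba}^{\{21\}}G_{z}$ produces two-resolvent traces of the form $\brkt{G_{z_i}^{[k2]}(\cdot)G_{w_j}^{[1l]}(\cdot)}$, which I evaluate using the local law for products of two resolvents from \cite{Cipolloni-Erdos-Schroder2019arXiv}. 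Plugging in the small-$\eta$ asymptotics \eqref{eq:asymp_M} then reads off $V$ and $V^{\circ}$: the off-diagonal entries $M_z^{[12]} = -\bar{z}^{-1} + O(\eta^2)$ and $M_z^{[21]} = -z^{-1} + O(\eta^2)$ produce the $1/(z\bar{w})$-prefactor of $V^{\circ}$, while inverting the operator $1-\caS[\,\cdot\,]$ that governs two-resolvent products yields the geometric resummation $(z\bar{w}-1)^{-1}$, and in the tracial case the squared version $(1-z\bar{w})^{-2}$ characteristic of $V$.

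For the induction step, assume the Wick formula for all $n' < n$. Select one factor, say $\caX_{1}$, apply the same block identity to expose an $X$-factor, and apply the cumulant expansion to $\expct{X_{ab}\cdot F}$, where $F$ collects the remaining factors together with one entry of $G_{z_{1}}$. The first-order contribution $N^{-1}\expct{\partial_{\bar X_{ab}}F}$ expands by Leibniz into three types: (i) the derivative lands on some $\caY_{j}$, producing, after resummation and the same two-resolvent local law used in the base case, a pairing contribution $c(1,j)\,\expct{\caX^{(1)}\caY^{(j)}}$ with $c(1,j) = V^{\circ}(z_{1},w_{j})\brkt{A_{1}B_{j}\adj}$ on $S\times T$, $c(1,j)=V(z_{1},w_{j})$ on $S^{c}\times T^{c}$, and lower order on mixed pairs (because the remaining trace of a centered matrix vanishes to leading order); (ii) the derivative lands on another $\caX_{k}$, which would require a $\expct{X_{ab}X_{cd}}$-pairing and therefore vanishes for complex $\chi$; (iii) the derivative lands inside $\caX_{1}$ itself, giving a self-loop absorbed by the centering $\ctr{\cdot}$ together with the a priori bound $\caX_{i}\prec 1$. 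Higher cumulants contribute $O(N^{-1/2+\epsilon})$ by standard power counting using the deterministic norm bound \eqref{eq:normG} and the moment hypothesis on $\chi$. Summing contribution (i) over $j$ and invoking the induction hypothesis on each $\expct{\caX^{(1)}\caY^{(j)}}$ reproduces the Wick sum on the right-hand side of \eqref{eq:Wick_result}, with the $\absv{S}=\absv{T}$ constraint emerging from the parity-preserving nature of the recursion.

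The main technical obstacle is that the two-resolvent local law and its small-$\eta$ expansion must remain effective at $\eta = N^{-2}$, far below the optimal local-law scale $\eta\gg 1/N$. This is where $\absv{z}\geq 1+\delta/2$ is decisive: the deterministic bound $\norm{G_{z}}\leq\kappa^{-1}$ from \eqref{eq:normG} holds on the high-probability event $\Omega_{\delta}(\kappa)$ uniformly in $\eta$, so all cumulant-expansion power counts and local-law error terms remain under control down to arbitrarily small $\eta$. The remaining challenge is the careful bookkeeping of error terms — verifying that non-pairing derivatives, higher cumulants, the centering corrections, and the negligible off-event contribution all combine to $O(N^{-1/2+\epsilon})$ — which adapts the strategy of \cite{Cipolloni-Erdos-Schroder2019arXiv,Cipolloni-Erdos-Schroder2020arXiv} to the present block setting where $G_{z}^{[21]}$ plays the role of $(X-z)^{-1}$.
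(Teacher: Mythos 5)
Your overall architecture (expose an $X$ through the $[11]$-block of $(W_z-\ii\eta)G_z=I$, cumulant-expand, identify the pairing kernels via the two-resolvent local law, iterate/induct) is the same as the paper's, but there is a genuine gap at your contribution (ii). You claim that when the $\partial_{\bar X_{ab}}$-derivative lands on another $\caX_k$ the term "would require a $\expct{X_{ab}X_{cd}}$-pairing and therefore vanishes for complex $\chi$". That is not what happens: the $\expct{\chi^2}=0$ symmetry only kills the $\kappa(2,0)$ term (the $\partial_{X_{ab}}$-derivative); the surviving $\kappa(1,1)$ term carries $\partial_{\bar X_{ab}}$, and this derivative acts nontrivially on $\caX_k$, because at $\eta=N^{-2}>0$ the block $G_{z}^{[21]}=(X-z)^*\bigl((X-z)(X-z)^*+\eta^2\bigr)^{-1}$ genuinely depends on $\bar X$ (it is only in the $\eta\to0$ limit that it becomes the analytic function $(X-z)^{-1}$). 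Concretely, this contribution is
\begin{equation*}
\frX_k=\brkt{G_{z_k}^{[11]}A_kG_{z_k}^{[22]}G_{z_1}^{[21]}A_1},
\end{equation*}
which is not identically zero; it is merely small, and proving $\expct{\absv{\frX_k}^2}=O(N^{-2+\epsilon})$ is one of the main technical points of the argument (the first half of \eqref{eq:comput1}). In the paper this requires the two-resolvent local law at $\eta=N^{-2}$ with the stability bound $\wh\beta\wh\beta_*\gtrsim\delta$ from \eqref{eq:beta_lb}, the observation via \eqref{eq:2G_mat} that the relevant deterministic approximations $M_{A^{\{12\}}}(z,w)$ are $O(\eta)$ (this is the rigorous counterpart of your "analytic pairing vanishes" heuristic), the self-renormalization \eqref{eq:defn_renorm} with the variance bound \eqref{eq:3G_var} for underlined three-resolvent traces, and a small linear-system argument to control $\caS[G_iI^{\{22\}}G_1A_1^{\{11\}}G_i]$. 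Your proposal replaces all of this with an incorrect symmetry argument, so the vanishing of the $\caX$--$\caX$ (and likewise $\caY$--$\caY$) pseudo-covariances — precisely the statement $\expct{\zeta(z,A)\zeta(w,B)}=0$ in the complex case — is left unproven, and the recursion you assert does not follow.

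Two smaller points in the same vein: your step (iii) attributes the harmlessness of the self-derivative to the centering, but the actual mechanism is that $\Tr G_{z_1}^{[22]}$ and $\Tr G_{z_1}^{[11]}A_1$ are $O_\prec(1)$ rather than $O(N)$ (because the diagonal blocks of $M_z$ are $O(\eta)$ outside the disk, cf. \eqref{eq:prior}), which together with the $1/N$ from the normalized trace gives $O(N^{-1+\epsilon})$; and the higher-cumulant bound \eqref{eq:comput2} is not a one-line norm power count — terms where derivatives hit the other factors produce entries like $(G_{z_k}^{[21]}A_kG_{z_k}^{[21]})_{ba}$ whose control needs the entrywise bound \eqref{eq:entry_prior} and Cauchy--Schwarz in the indices $a,b$, as in the paper's case analysis. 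These are fixable elaborations, but the treatment of $\frX_k$ is a missing core ingredient.
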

	
	Before proceeding to the proof of Proposition \ref{prop:Wick}, we remark that the averaged local law gives a priori bounds on $\Tr G_{z}^{[kl]}A$ for $k,l\in\{1,2\}$. To see this, note that Lemma \ref{lem:ll} and \eqref{eq:asymp_M} imply
	\beqs
		\absv{\Tr G_{z}^{[kl]}A+\brkt{A}\Tr M_{z}^{[kl]}}=\absv{\Tr (G_{z}-M_{z})A^{\{lk\}}}\prec 1,
	\eeqs
	and by triangle inequality this leads to $\absv{\ctr{\Tr G_{z}^{[kl]}A}}\prec 1$. In particular, we have
	\beq\label{eq:prior}
		\absv{\Tr G^{[kl]}A}\prec 1 \qquad	\text{if }\brkt{A}=0\text{ or }k=l,
	\eeq
	where we used \eqref{eq:asymp_M} and $\eta=N^{-2}$ for the case $k=l$. The same argument applies to $\caX_{i}$ and $\caY_{j}$, so that 
	\beq\label{eq:prior_XY}
		\caX_{i}=\ctr{\Tr G_{z_{i}}^{[21]}A_{i}}\prec 1, \qquad \caY_{j}=\ol{\ctr{\Tr G_{w_{j}}^{[21]}B_{j}}}\prec 1.
	\eeq
	We will often use \eqref{eq:prior_XY} to simply replace factors of $\caX_{i}$ and $\caY_{j}$ by $N^{\epsilon}$ in various estimates.
	
	\begin{proof}[Proof of Proposition \ref{prop:Wick}]
		We aim to derive asymptotic Wick formulas for the joint moment in \eqref{eq:Wick_result} with respect to the first index $1\in\bbrktt{p}$. To be precise, we prove that
		\begin{align}
			\expct{\caX\caY}&=\sum_{j\in T}\frac{\brkt{A_{1}B_{j}\adj}}{z_{1}\ol{w}_{j}(z_{1}\ol{w}_{j}-1)}\expct{\caX^{(1)}\caY^{(j)}}+O(N^{-1/2+\epsilon})&	&\text{if }1\in S, \label{eq:Wick_func}\\
			\expct{\caX\caY}&=\sum_{j\in T^{c}}\frac{1}{(z_{1}\ol{w}_{j}-1)^{2}}\expct{\caX^{(1)}\caY^{(j)}}+O(N^{-1/2+\epsilon})&	&\text{if }1\in S^{c}	\label{eq:Wick_nonf}
		\end{align}
	hold uniformly over $z_{i},w_{j}\in\bbD_{1+\delta}^{c}$.
		
	Using the resolvent identity $(W_{z}-\ii\eta)G=I$, we write 
	\beqs
		\ii\eta\Tr G_{z_{1}}^{[11]}A_{1}=\Tr XG_{z_{1}}^{[21]} A-z\Tr G_{z_{1}}^{[21]}A_{1}-\Tr A_{1}.
	\eeqs
	Since $\Tr A_{1}$ is deterministic, the identity $\expct{X\ctr{Y}}=\expct{\ctr{X}Y}$ gives
	\beq\label{eq:j_mmnt_1}
	\begin{split}
		\expct{\caX \caY}
		=\expct{(\Tr G_{z_{1}}^{[21]}A_{1})\ctr{\caX^{(1)}\caY}}
		&=\frac{\ol{z}_{1}}{\absv{z_{1}}^{2}}\expct{\Tr \left((XG_{z_{1}}^{[21]}-\ii\eta G_{z_{1}}^{[11]})A_{1}\right)\ctr{\caX^{(1)}\caY}}\\
		&=\frac{1}{z_{1}}\expct{\Tr XG_{z_{1}}^{[21]}\ctr{\caX^{(1)}\caY}}+O(N^{\epsilon}\eta),	
	\end{split}
	\eeq
	where we used \eqref{eq:prior} to the second term.
	
	Now we calculate the first term on the right-hand side of \eqref{eq:j_mmnt_1} using cumulant expansions (see for example \cite{He-Knowles2017});
	\beq\label{eq:Stein_2}
	\begin{split}
		&\expct{\Tr({XG_{z_{1}}^{[21]} A_{1}})\ctr{\caX^{(1)}\caY}}	=\sum_{a,b}\expct{X_{ab}\Tr\Delta_{ab}G_{z_{1}}^{[21]}A_{1}\ctr{\caX^{(1)}\caY}}
		=\frac{1}{N}\sum_{a,b}\Expct{\Tr \Delta_{ab}\partial_{ab}^{(0,1)}[G_{z_{1}}^{[21]}]A_{1}\ctr{\caX^{(1)}\caY}}\\
		+&\frac{1}{N}\sum_{i\in\bbrktt{2,p}}\sum_{a,b}\Expct{\partial_{ab}^{(0,1)}[\caX_{i}]\Tr\Delta_{ab}G_{z_{1}}^{[21]}A_{1}\caX^{(1,i)}\caY}
		+\frac{1}{N}\sum_{j\in\bbrktt{q}}\sum_{a,b}\Expct{\partial_{ab}^{(0,1)}[\caY_{j}]\Tr\Delta_{ab}G_{z_{1}}^{[21]}A_{1}\caX^{(1)}\caY^{(j)}}	\\
		+&\sum_{\substack{k,l\in\N \\ k+l\geq 2}}\frac{\kappa(k+1,l)}{k!l!}\sum_{a,b}\Expct{\partial^{(k,l)}_{ab}\left(\Tr\Delta_{ab}G_{z_{1}}^{[21]}A_{1}\ctr{\caX^{(1)}\caY}\right)},
	\end{split}
	\eeq
	where we defined $\Delta_{ab}\in M_{N}(\C)$ with $(\Delta_{ab})_{ij}=\delta_{ai}\delta_{bj}$, the cumulants
	\beqs
	\kappa(k,l)\deq (-\ii)^{k+l}\left[\frac{\partial^{k+l}}{\partial s^{k}\partial t^{l}}\log\expct{\e{\ii sX_{11}+\ii t\ol{X}_{11}}}\right]_{s,t=0},	\AND \partial^{(k,l)}_{ab}\deq\frac{\partial^{k+l}}{\partial X_{ab}^{k}\partial\ol{X}_{ab}^{l}}.
	\eeqs
	Note that the term in \eqref{eq:Stein_2} corresponding to $\partial_{ba}^{(1,0)}$ vanishes since $\kappa(2,0)=\expct{X_{11}^{2}}=0$. Applying $\partial_{ab}^{(0,1)}[G^{[kl]}_{z_{1}}]=-G_{z_{1}}^{[k2]}\Delta_{ba}G_{z_{1}}^{[1l]}$ to the first term of \eqref{eq:Stein_2}, we find that
	\beqs
		\frac{1}{N}\sum_{a,b}\Tr \Delta_{ab}\partial_{ab}^{(0,1)}[G_{z_{1}}^{[21]}]A_{1}=-\frac{1}{N}\sum_{a,b}\Tr \Delta_{ab}G_{z_{1}}^{[22]}\Delta_{ba}G_{z_{1}}^{[11]}A_{1}=-\brkt{G_{z_{1}}^{[22]}}\Tr G_{z_{1}}^{[11]}A_{1}.
	\eeqs
	Thus the contribution of the first term on the right-hand side of \eqref{eq:Stein_2} reads
	\beq\label{eq:Stein_1_1}
	\begin{split}
		&\absv{\expct{\brkt{G_{z_{1}}^{[22]}}\Tr G_{z_{1}}^{[11]}A_{1}\ctr{\caX^{(1)}\caY}}}
		=\frac{1}{N}\absv{\expct{(\Tr G_{z_{1}}^{[22]})(\Tr G_{z_{1}}^{[11]}A_{1})\ctr{\caX^{(1)}\caY}}}
		\leq N^{-1+\epsilon},
	\end{split}
	\eeq
	where we applied \eqref{eq:prior} to the first two traces and \eqref{eq:prior_XY} to get $\caX,\caY\prec 1$.
	
	The same calculations for the second and third terms of \eqref{eq:Stein_2} give
	\beq\label{eq:Stein_1_2}
	\begin{split}
		&\frac{1}{N}\sum_{a,b}\partial_{ab}^{(0,1)}[\caX_{i}]\Tr\Delta_{ab}G_{z_{1}}^{[21]}A_{1}
		=-\brkt{G_{z_{i}}^{[11]}A_{i}G_{z_{i}}^{[22]}G_{z_{1}}^{[21]}A_{1}}=:-\frX_{i},\\
		&\frac{1}{N}\sum_{a,b}\partial_{ab}^{(0,1)}[\caY_{j}]\Tr\Delta_{ab}G_{z_{1}}^{[21]}A_{1}
		=-\brkt{G_{w_{j}}^{[12]}B_{j}\adj G_{w_{j}}^{[12]}G_{z_{1}}^{[21]}A_{1}}=:-\frY_{j}.
	\end{split}
	\eeq
	Substituting \eqref{eq:Stein_1_1} and \eqref{eq:Stein_1_2} into \eqref{eq:Stein_2} and then plugging the result back to \eqref{eq:j_mmnt_1}, we conclude
	\beq\label{eq:Wick_start}
	\begin{split}
		z_{1}\expct{\caX\caY}	=&-\sum_{i\in\bbrktt{2,p}}\expct{\frX_{i}\caX^{(1,i)}\caY}-\sum_{j\in\bbrktt{q}}\expct{\frY_{j}\caX^{(1)}\caY^{(j)}}	\\
		&+\sum_{\substack{k,l\in\Z_{+}\\ k+l\geq 2}}\frac{\kappa(k+1,l)}{k!l!}\sum_{a,b}\Expct{\partial^{(k,l)}_{ab}\left(\Tr\Delta_{ab}G_{z_{1}}^{[21]}A_{1}\ctr{\caX^{(1)}\caY}\right)}+O(N^{-1+\epsilon}).
	\end{split}
	\eeq
	
	We estimate each term of \eqref{eq:Wick_start} using the following proposition to derive \eqref{eq:Wick_func} and \eqref{eq:Wick_nonf}. Its proof is postponed to Section \ref{sec:prf_tech}.
	\begin{prop}\label{prop:tech}
		Under the assumptions of Proposition \ref{prop:Wick}, the following estimates hold uniformly over $z_{i},w_{j}\in\bbD_{1+\delta}^{c}$ and for any fixed $k,l\in\N$ with $k+l\geq 2$;
	\begin{align}
		\expct{\absv{\frX_{i}}^{2}}=O(N^{-2+\epsilon}), \qquad
		\Expct{\Absv{\frY_{j}+\frac{\brkt{A_{1}B_{j}\adj}}{\ol{w}_{j}(z_{1}\ol{w}_{j}-1)}+\frac{z_{1}\brkt{A_{1}}\brkt{B_{j}\adj}}{(z_{1}\ol{w}_{j}-1)^{2}}}^{2}}&=O(N^{-2+\epsilon}),	\label{eq:comput1}\\
		\frac{\kappa(k+1,l)}{k!l!}\sum_{a,b}\partial^{(k,l)}_{ab}\left[\Tr\Delta_{ab}G_{z_{1}}^{[21]}A_{1}\ctr{\caX^{(1)}\caY}\right]\prec N^{-1/2-(k+l-4)_{+}/2}.&	\label{eq:comput2}
	\end{align} 
	\end{prop}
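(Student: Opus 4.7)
The three estimates in Proposition \ref{prop:tech} are of different natures, and I would address them in order.

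The first estimate, on $\frX_i$, is essentially trivial because we are outside the spectrum. On the event $\Omega_\delta(\kappa)$ from Lemma \ref{lem:sing}, the block formula \eqref{eq:G_block} gives $\norm{G_{z_i}^{[11]}}, \norm{G_{z_i}^{[22]}} \leq \eta\kappa^{-2}$ (the factor $w = \ii\eta$ sits out front and $((X-z_i)(X-z_i)\adj + \eta^2)^{-1}$ has norm at most $\kappa^{-2}$), while $\norm{G_{z_1}^{[21]}} \leq \kappa^{-1}$ by \eqref{eq:normG}; thus $\absv{\frX_i} \lesssim \eta^2 = N^{-4}$ deterministically on this event. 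On the complement, the universal bound $\norm{G} \leq \eta^{-1} = N^2$ combined with $\prob{\Omega_\delta(\kappa)^c} \leq N^{-D}$ from Lemma \ref{lem:sing}, inserted via Cauchy--Schwarz, contributes $O(N^{-D'})$ for any fixed $D'$, so in total $\expct{\absv{\frX_i}^2} = O(N^{-8})$, comfortably stronger than the stated $O(N^{-2+\epsilon})$.

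The estimate on $\frY_j$ is the substantive content of the proposition. Unlike $\frX_i$, this quantity is of order one on $\Omega_\delta(\kappa)$, and its leading deterministic part must be identified explicitly. My plan is to decompose each resolvent block as $G = M + (G-M)$ in $\frY_j = \brkt{G_{w_j}^{[12]} B_j\adj G_{w_j}^{[12]} G_{z_1}^{[21]} A_1}$. Replacing all three blocks by $M$ and using the asymptotics \eqref{eq:asymp_M} yields only $-\brkt{A_1 B_j\adj}/(z_1 \ol{w_j}^{2})$, which does not match the announced formula; the missing contribution comes from the coincident-parameter product $G_{w_j}^{[12]} B_j\adj G_{w_j}^{[12]}$, whose deterministic approximation is \emph{not} simply $M_{w_j}^{[12]} B_j\adj M_{w_j}^{[12]}$ but involves a linear-response correction governed by the inhomogeneous linearization of the MDE \eqref{eq:Dyson_M} at $M_{w_j}$. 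Invoking the two-resolvent local law in the spirit of \cite{Cipolloni-Erdos-Schroder2019arXiv} together with the block-constant structure of $M_z$ and \eqref{eq:asymp_M}, one should be able to compute this linear response explicitly; combined with the outer factor $M_{z_1}^{[21]} A_1 \approx -A_1/z_1$, it reproduces the two terms $-\brkt{A_1 B_j\adj}/(\ol{w_j}(z_1\ol{w_j}-1))$ and $-z_1\brkt{A_1}\brkt{B_j\adj}/(z_1\ol{w_j}-1)^2$ appearing in the claim. The $L^2$ deviation from this deterministic approximation is then $O(N^{-1+\epsilon})$, and the event $\Omega_\delta(\kappa)^c$ is handled as in (a).

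For the high-order cumulant bound, I would use a standard power-counting argument. Leibniz rule distributes $\partial_{ab}^{(k,l)}$ across $\Tr\Delta_{ab} G_{z_1}^{[21]} A_1$ and each factor inside $\ctr{\caX^{(1)}\caY}$; each action on a $G$-factor uses $\partial_{ab}^{(0,1)} G_z = -G_z E_{ba}^{\{21\}} G_z$ (and its $X$-conjugate), producing one extra $G$-entry and identifying two indices with $a$ or $b$. The cumulant scales as $\kappa(k+1,l) \sim N^{-(k+l+1)/2}$ since $X_{ab}$ has size $N^{-1/2}$, summation over $(a,b)$ contributes at most $N^2$, each $G$-entry is controlled by $\absv{M_{ij}} + \absv{(G-M)_{ij}} \lesssim 1 + N^{-1/2}$ from Lemma \ref{lem:ll}, and the a priori bound $\caX_i, \caY_j \prec 1$ handles the untouched factors; these estimates balance to $N^{-1/2}$ in the critical range $k+l \in \{2,3,4\}$, and each additional differentiation beyond $k+l=4$ turns a diagonal $M_{ii} = O(1)$ into a pair of off-diagonal $G_{ij}$'s, gaining a further $N^{-1/2}$. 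The main obstacle in the whole proposition is therefore estimate (b): solving the linearized MDE and matching the resulting deterministic quantity to the compact form involving $(z_1\ol{w_j}-1)^{-1}$ and $(z_1\ol{w_j}-1)^{-2}$ is where the genuine computation lies, while (a) and (c) reduce to standard bookkeeping once Lemmas \ref{lem:sing} and \ref{lem:ll} are in hand.
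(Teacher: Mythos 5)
Your bound on $\frX_{i}$ is correct and in fact simpler and stronger than the paper's own treatment: on $\Omega_{\delta}(\kappa)$ both diagonal blocks $G_{z_{i}}^{[11]},G_{z_{i}}^{[22]}$ have norm $O(\eta)$ by \eqref{eq:G_block}, so $\absv{\frX_{i}}\lesssim\eta^{2}$ there, and the complement is negligible by $\norm{G}\leq\eta^{-1}$ and Lemma \ref{lem:sing}; the paper instead runs $\frX_{i}$ through the same renormalization/two-resolvent machinery as $\frY_{j}$ and only obtains the (sufficient) rate $N^{-2+\epsilon}$. Uniformity in $z_{i}$ is automatic since the event is uniform in $z$.

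The genuine gap is in the second estimate of \eqref{eq:comput1}. You correctly observe that replacing all blocks by $M$ gives the wrong constant, but the step you then assert --- that the coincident-parameter product $G_{w_{j}}^{[12]}B_{j}\adj G_{w_{j}}^{[12]}$ may be replaced by a ``linear response'' deterministic object with an $L^{2}$ error $O(N^{-1+\epsilon})$ --- is precisely what has to be proved, and it does not follow from the two-resolvent local law. $\frY_{j}$ is a normalized trace of a product of \emph{three} resolvents: Lemma \ref{lem:2Gll} controls $\brkt{Q(G_{z}PG_{w}-M_{P}(z,w))}$ only for \emph{deterministic} $Q$, whereas here the remaining factor $G_{z_{1}}^{[21]}A_{1}$ is random and correlated with $G_{w_{j}}$, and the operator-norm distance $\norm{G_{w_{j}}PG_{w_{j}}-M_{P}(w_{j},w_{j})}$ is only $O(1)$, so neither route yields smallness of the replacement error. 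The paper supplies the missing mechanism: it applies the self-renormalization \eqref{eq:defn_renorm}--\eqref{eq:renorm} to one resolvent factor, reduces the leading part to two-resolvent quantities with deterministic test matrices (evaluated via \eqref{eq:2G_mat}), bounds the underlined three-resolvent term in $L^{2}$ by $N^{-2}$ through \eqref{eq:3G_var} (imported from the CES argument with $\eta_{*}$ replaced by $1$ thanks to \eqref{eq:normG}), and closes a self-consistent $2\times2$ system, as in \eqref{eq:X111}--\eqref{eq:X112}, for the partial traces entering $\caS[\,\cdot\,]$. Without an argument of this type (or an independently proved three-resolvent local law), your sketch asserts the conclusion rather than deriving it.

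The power counting you propose for \eqref{eq:comput2} also does not close as stated. With the entry bound $\absv{G_{ij}}\lesssim 1+N^{-1/2}$ that you invoke, the $k+l=2$ contribution is only $\absv{\kappa(k+1,l)}\,N^{2}=O(N^{1/2})$, far from $N^{-1/2}$. What saves the paper is the refined input \eqref{eq:entry_prior}: the entries generated are exactly $G^{[11]}_{aa},G^{[12]}_{ab},G^{[21]}_{ba},G^{[22]}_{bb}$; outside the spectrum the diagonal-block entries are $\prec N^{-1/2}$ (since $M^{[11]},M^{[22]}=O(\eta)$ by \eqref{eq:asymp_M}), while the order-one entries of the off-diagonal blocks occur only at coincident indices, which collapses one of the two summations; and when derivatives hit $\caX_{i}$ or $\caY_{j}$ one must bound single entries of two-resolvent products such as $(G_{i}^{[21]}A_{i}G_{i}^{[21]})_{ba}$ via Lemma \ref{lem:2Gll} and then use Cauchy--Schwarz on $\sum_{a,b}\absv{(A_{i})_{ba}}$. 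Your heuristic that each differentiation beyond $k+l=4$ gains $N^{-1/2}$ by converting diagonal $M$-entries into off-diagonal $G$-entries is neither the actual mechanism nor needed: for $k+l\geq4$ the trivial $O_{\prec}(1)$ bounds on all factors together with the cumulant decay already give \eqref{eq:comput2}; the real work is the case analysis in the range $k+l\in\{2,3\}$, which your proposal does not address.
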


	Armed with Proposition \ref{prop:tech}, we proceed with the proof of Proposition \ref{prop:Wick}. We use Cauchy-Schwarz inequality to estimate
	\beqs
	\absv{\expct{\frX_{i}\caX^{(1,i)}\caY}}\leq \expct{\absv{\frX_{i}}^{2}}^{1/2}\expct{\absv{\caX^{(1,i)}\caY}^{2}}^{1/2}\leq N^{-1+\epsilon}
	\eeqs
	where we used \eqref{eq:prior_XY} and \eqref{eq:comput1} in the second inequality, and we can estimate the subleading term of $\frY_{j}$ similarly. Combining \eqref{eq:Wick_start}, \eqref{eq:comput1}, and \eqref{eq:comput2} proves \eqref{eq:Wick_func} and \eqref{eq:Wick_nonf}.
	
	Since the specific choice of index $1$ in \eqref{eq:Wick_func} and \eqref{eq:Wick_nonf} played no role in the argument, we can apply \eqref{eq:Wick_func} and \eqref{eq:Wick_nonf} iteratively for any index, yielding \eqref{eq:Wick_result}. This completes the proof of Proposition \ref{prop:Wick} for complex-valued $\chi$, modulo Proposition \ref{prop:tech}
	\end{proof}

	\section{Proof of Proposition \ref{prop:tech} and \eqref{eq:mean_asymp_G}}\label{sec:prf_tech}
	The quantities $\frX_{i}$ and $\frY_{j}$ are traces of products of three resolvent, whose variants were studied in \cite{Cipolloni-Erdos-Schroder2019arXiv}. We will shortly see that similar calculations apply to $\frX_{i}$ and $\frY_{j}$, and in fact the local laws for product of two resolvent, Theorem 5.2 therein, can be used verbatim. To state the local laws for products, we introduce the relevant notations. For given $z,w\in\C$, we define the operator $\wh{\caB}(z,w)$ as 
	\beqs
		\wh{\caB}(z,w)\deq 1-M_{z}(\ii\eta)\caS[\cdot]M_{w}(\ii\eta),
	\eeqs
	where $\caS$ was defined in \eqref{eq:Dyson_M}. For a given deterministic $(2N\times 2N)$ matrix $B$, we further define
	\beqs
		M_{B}(z,w)=\wh{\caB}(z,w)^{-1}[M_{z}(\ii\eta)BM_{w}(\ii\eta)].
	\eeqs
	Recall from \cite[Eq. (6.4)]{Cipolloni-Erdos-Schroder2019arXiv} that the operator $\wh{\caB}(z,w)$ has four eigenvalues, $1,1$, $\wh{\beta}$, and $\wh{\beta}_{*}$, given by
	\beq\label{eq:def_beta}
		\wh{\beta},\wh{\beta}_{*}\deq 1-u_{z}u_{w}\re (z\ol{w})\pm\sqrt{m_{z}^{2}m_{w}^{2}-u_{z}^{2}u_{w}^{2}(\im (z\ol{w}))^{2}}.
	\eeq
	Applying the asymptotics in \eqref{eq:asymp_M}, for $\absv{z}^{2},\absv{w}^{2}>1+\delta$ we find that
	\beq\label{eq:beta_lb}
		\wh{\beta}\wh{\beta}_{*}		=\absv{u_{z}u_{w}z\ol{w}-1}^{2}-m_{z}^{2}m_{w}^{2}\geq\absv{(1+\delta)^{-1}-1}+O(\eta^{2})\gtrsim\delta.
	\eeq
	Since $\absv{\beta},\absv{\beta_{*}}\lesssim1$ the inverse of $\wh{\caB}$ has $O(1)$ operator norm, in contrast to Lemma 6.1 of \cite{Cipolloni-Erdos-Schroder2019arXiv} where the parameters $z,w$ were allowed inside the unit disk. In particular, this allows us to strengthen the bound of \cite[Theorem 5.2]{Cipolloni-Erdos-Schroder2019arXiv} in the following form. 
	\begin{lem}\label{lem:2Gll}
		Let $\delta>0$, $P,Q\in M_{2N}(\C)$ and $\bsx,\bsy\in \C^{2N}$ be deterministic with $\norm{P},\norm{Q},\norm{\bsx},\norm{\bsy}\leq 1$. Then the following hold uniformly over $z,w\in \{z\in\C:\absv{z}^{2}>1+\delta\}$;
		\beqs
		\begin{aligned}
			\absv{\brkt{\bsx,(G_{z}P G_{w}-M_{P}(z,w))\bsy}}&\prec \frac{1}{\sqrt{N}}, \\
			\absv{\brkt{Q(G_{z}PG_{w}-M_{P}(z,w))}}&\prec \frac{1}{N},
		\end{aligned}
		\eeqs
		where we abbreviated $G_{z}\equiv G_{z}(\ii\eta)$ and $G_{w}\equiv G_{w}(\ii\eta)$ with $\eta=N^{-2}$.
	\end{lem}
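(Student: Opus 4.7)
The plan is to obtain Lemma \ref{lem:2Gll} as a direct corollary of \cite[Theorem 5.2]{Cipolloni-Erdos-Schroder2019arXiv}, whose conclusion has the same shape (isotropic $N^{-1/2}$, averaged $N^{-1}$) but carries multiplicative instability factors involving $\Norm{\wh{\caB}(z,w)^{-1}}$. The whole point of working in the regime $\absv{z}^{2},\absv{w}^{2}>1+\delta$ (i.e.\ outside the limiting spectrum) is precisely that these instability factors are $O(1)$, so the reduction is mostly bookkeeping.

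The key step is to verify that $\Norm{\wh{\caB}(z,w)^{-1}}\lesssim_{\delta} 1$ uniformly in the relevant $(z,w)$-range. The operator $\wh{\caB}(z,w)$ has the eigenvalue $1$ on the (large) subspace annihilated by $\caS$, and on the complementary finite-dimensional range of $M_{z}\caS[\cdot]M_{w}$ it has eigenvalues $1,\wh{\beta},\wh{\beta}_{*}$ given by \eqref{eq:def_beta}. Using the uniform bound \eqref{eq:norm_M} on $m_{z},u_{z}$ one immediately gets $\absv{\wh{\beta}},\absv{\wh{\beta}_{*}}\lesssim 1$, while the lower bound \eqref{eq:beta_lb} yields $\absv{\wh{\beta}\wh{\beta}_{*}}\gtrsim \delta$. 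Together these give $\absv{\wh{\beta}},\absv{\wh{\beta}_{*}}\gtrsim \delta$, hence $\Norm{\wh{\caB}(z,w)^{-1}}\lesssim 1$ and, by \eqref{eq:norm_M}, $\Norm{M_{P}(z,w)}\lesssim \Norm{P}$.

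Once this uniform invertibility of $\wh{\caB}$ is in hand, I would simply quote \cite[Theorem 5.2]{Cipolloni-Erdos-Schroder2019arXiv} applied with spectral parameters $\ii\eta$ (so $\eta=N^{-2}$) at $z$ and $w$; every multiplicative factor appearing there that is potentially $N$-dependent in general (powers of $\Norm{\wh{\caB}^{-1}}$, $\Norm{M_{z}}$, $\Norm{M_{w}}$, and functions of $\eta$ via the control parameter for $W_{z}$) is $O(1)$ in our setting. The required bounds for $\brkt{\bsx,(G_{z}PG_{w}-M_{P}(z,w))\bsy}$ and $\brkt{Q(G_{z}PG_{w}-M_{P}(z,w))}$ then follow at once.

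The only obstacle I anticipate is a purely technical one: the original theorem was stated with $z,w$ in a bounded domain that may not include the region $\absv{z},\absv{w}\to\infty$ required here. This is harmless because as $\absv{z}\to\infty$ the asymptotic expansion \eqref{eq:asymp_M} yields $M_{z}=O(\absv{z}^{-1})$, so $\wh{\caB}(z,w)\to 1$ in operator norm and $M_{z}PM_{w}\to 0$; the estimates therefore only improve in this limit. A brief monotonicity/continuity argument (or a compactness reduction to $1+\delta\le\absv{z},\absv{w}\le R$ followed by a trivial large-$\absv{z}$ estimate using $\Norm{G_{z}}\le\eta^{-1}\wedge\dist(z,\sigma(X))^{-1}$) handles this, and then the lemma follows.
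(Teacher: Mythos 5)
Your reduction of the stability issue is the right one and matches the paper: the lower bound \eqref{eq:beta_lb} together with \eqref{eq:norm_M} indeed gives $\absv{\wh\beta},\absv{\wh\beta_*}\gtrsim\delta$, hence $\norm{\wh{\caB}(z,w)^{-1}}\lesssim 1$ and $\norm{M_{P}(z,w)}\lesssim 1$ uniformly in the regime $\absv{z}^{2},\absv{w}^{2}>1+\delta$ (and your remark about $\absv{z}\to\infty$ is a harmless technicality). The gap is in the next step: you cannot ``simply quote'' \cite[Theorem 5.2]{Cipolloni-Erdos-Schroder2019arXiv} as a black box at $\eta=N^{-2}$. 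The error bounds of that theorem (and the estimates in its proof, e.g.\ Eq.~(5.16) there) carry explicit inverse powers of $\eta_{*}=\min$ of the spectral parameters, because it is designed to work also inside the limiting spectrum where $\norm{G}$ is only bounded by $\eta^{-1}$. At $\eta=N^{-2}$ those factors are of size $N^{2}$ or worse, so the theorem as stated is vacuous in the present regime; your assertion that ``functions of $\eta$ via the control parameter'' are $O(1)$ is exactly the point that is false for the quoted statement and has to be \emph{earned} by modifying its proof. This is why the paper speaks of \emph{strengthening} Theorem 5.2 rather than applying it.

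The missing ingredient is the a priori resolvent bound outside the spectrum: on the very-high-probability event $\Omega_{\delta}(\kappa)$ of Lemma \ref{lem:sing} one has $\norm{G_{z}(\ii\eta)}\leq\kappa^{-1}$ uniformly in $\eta>0$, see \eqref{eq:normG}, whence $\absv{\brkt{\bsx,G_{z}PG_{w}\bsy}}\prec 1$ and $\absv{\brkt{Q\,G_{z}PG_{w}}}\prec 1$ as in \eqref{eq:GGll_prf}. With these bounds one goes \emph{into} the proof of Theorem 5.2, replaces every occurrence of $\eta_{*}^{-1}$ by an $O(1)$ quantity, and may skip the bootstrapping/iteration step (Eq.~(5.14) there) since the a priori bound on $G_{z}PG_{w}$ is available from the start; only then does feeding \eqref{eq:beta_lb} into Eq.~(5.9) of that proof yield the claimed $N^{-1/2}$ and $N^{-1}$ rates uniformly for $\eta=N^{-2}$. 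Your proposal never invokes \eqref{eq:normG} (the smallest-singular-value input from Lemma \ref{lem:sing}) for this purpose, and without it there is no mechanism to control the small-$\eta$ divergences, so the argument as written does not establish the lemma.
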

	\begin{proof}
		The proof is a straight forward modification of Theorem 5.2 in \cite{Cipolloni-Erdos-Schroder2019arXiv}. The only difference is that we use \eqref{eq:beta_lb} and 
		\beq\label{eq:GGll_prf}
			\absv{\brkt{A(G_{z}BG_{w})}}\prec 1, \qquad \absv{\brkt{\bsx,G_{z}BG_{w}\bsy}}\prec 1,
		\eeq
		which is due to \eqref{eq:normG}. Using \eqref{eq:normG} and \eqref{eq:GGll_prf}, we may replace all $\eta_{*}$ appearing in the proof of Theorem 5.2 in \cite{Cipolloni-Erdos-Schroder2019arXiv} by $1$, for example in Eq. (5.16) therein. Also, we may skip the iteration of Eq. (5.14) in \cite{Cipolloni-Erdos-Schroder2019arXiv} since we can use \eqref{eq:GGll_prf} from the beginning. Finally, feeding \eqref{eq:beta_lb} into Eq. (5.9) in \cite{Cipolloni-Erdos-Schroder2019arXiv} proves the result.
	\end{proof}

	Lemma \ref{lem:2Gll} shows that $M_{B}$ can be used as a deterministic approximation to products of two resolvents with a deterministic matrix $B$ in between that arises along our calculations. First we record the small $\eta$ asymptotics of $M_{B}$. For $z,w\in\bbD_{1+\delta}^{c}$, we have
	\beqs
	\begin{split}
		\wh{\caB}(z,w)[M_{B}]=&\begin{pmatrix}
			M_{B}^{[11]}-m_{z}m_{w}\brkt{M_{B}^{[22]}}+z\ol{w}u_{z}u_{w}\brkt{M_{B}^{[11]}} &
			M_{B}^{[12]}-wm_{z}u_{w}\brkt{M_{B}^{[22]}}-zu_{z}m_{w}\brkt{M_{B}^{[11]}} \\
			M_{B}^{[21]}-\ol{z}u_{z}m_{w}\brkt{M_{B}^{[22]}}-\ol{w}m_{z}u_{w}\brkt{M_{B}^{[11]}} &
			M_{B}^{[22]}-\ol{z}wu_{z}u_{w}\brkt{M_{B}^{[22]}}-m_{z}m_{w}\brkt{M_{B}^{[11]}}
		\end{pmatrix},	\\
		\wh{\caB}(z,w)[M_{B}]=&M_{z}BM_{w},
		\end{split}
	\eeqs
	due to the definitions of $\wh{\caB}$ and $M_{B}$, where we abbreviated $M_{B}\equiv M_{B}(z,w)$. Taking the trace of each block, we find the following system of equations;
	\beqs
	\left\{
	\begin{array}{r}
		(1-z\ol{w}u_{z}u_{w})\brkt{M_{B}^{[11]}}-m_{z}m_{w}\brkt{M_{B}^{[22]}}=\brkt{(M_{z}BM_{w})^{[11]}},\\
		-m_{z}m_{w}\brkt{M_{B}^{[11]}}+(1-\ol{z}wu_{z}u_{w})\brkt{M_{B}^{[22]}}=\brkt{(M_{z}BM_{w})^{[22]}}.
		\end{array}\right.
	\eeqs
	Solving this equation gives
	\beqs
	\begin{split}
	\begin{pmatrix}
		\brkt{M_{B}^{[11]}}\\
		\brkt{M_{B}^{[22]}}
	\end{pmatrix}
		&=\left(\absv{1-u_{z}u_{w}z\ol{w}}^{2}-m_{z}^{2}m_{w}^{2}\right)^{-1}
		\begin{pmatrix}
			1-u_{z}u_{w}\ol{z}w & m_{z}m_{w} \\
			m_{z}m_{w} & 1-u_{z}u_{w}z\ol{w}
		\end{pmatrix}
	\begin{pmatrix}
		\brkt{(M_{z}BM_{w})^{[11]}} \\ \brkt{(M_{z}BM_{w})^{[22]}}
	\end{pmatrix}\\
	&=
	\begin{pmatrix}
		(1-\ol{z}^{-1}w^{-1})^{-1} & 0 \\ 0 & (1-z^{-1}\ol{w}^{-1})^{-1}
	\end{pmatrix}
	\begin{pmatrix}
		\brkt{ (M_{z}BM_{w})^{[11]}} \\ \brkt{(M_{z}BM_{w})^{[22]}}
	\end{pmatrix}
	+O(\eta^{2}),
	\end{split}
	\eeqs
	where we used \eqref{eq:asymp_M} in the last equality. In particular, plugging in $B=A^{\{kl\}}$, we have
	\beq\label{eq:2G}
	\begin{aligned}
		\begin{pmatrix}\brkt{M_{A^{\{11\}}}(z,w)^{[11]}}\\\brkt{M_{A^{\{11\}}}(z,w)^{[22]}}\end{pmatrix}
		&=\frac{\brkt{A}}{z\ol{w}-1}\begin{pmatrix} 0 \\ 1 \end{pmatrix}+O(\eta),&
		\begin{pmatrix}
			\brkt{M_{A^{\{12\}}}(z,w)^{[11]}} \\
			\brkt{M_{A^{\{12\}}}(z,w)^{[22]}}
		\end{pmatrix}
		&=O(\eta) 	\\
		\begin{pmatrix}
			\brkt{M_{A^{\{21\}}}(z,w)^{[11]}} \\
			\brkt{M_{A^{\{21\}}}(z,w)^{[22]}}
		\end{pmatrix}&=O(\eta), &
		\begin{pmatrix}
			\brkt{M_{A^{\{22\}}}(z,w)^{[11]}} \\
			\brkt{M_{A^{\{22\}}}(z,w)^{[22]}}
		\end{pmatrix}
		&=\frac{\brkt{A}}{\ol{z}w-1}\begin{pmatrix} 1 \\ 0 \end{pmatrix}+O(\eta).
	\end{aligned}
	\eeq
	Since $M_{B}(z,w)=M_{z}(B+\caS[M_{B}(z,w)])M_{w}$, these formulas directly imply asymptotics for $M_{B}(z,w)$ as follows.
	\beq\label{eq:2G_mat}
	\begin{aligned}
		M_{A^{\{11\}}}(z,w)&=\frac{1}{z\ol{w}}\left(A+\frac{\brkt{A}}{z\ol{w}-1}\right)^{\{22\}}+O_{\norm{\cdot}}(\eta),	&
		M_{A^{\{12\}}}(z,w)&=\frac{1}{zw}A^{\{21\}}+O_{\norm{\cdot}}(\eta),	\\
		M_{A^{\{21\}}}(z,w)&=\frac{1}{\ol{zw}}A^{\{12\}}+O_{\norm{\cdot}}(\eta),	&
		M_{A^{\{22\}}}(z,w)&=\frac{1}{\ol{z}w}\left(A+\frac{\brkt{A}}{\ol{z}w-1}\right)^{\{11\}}+O_{\norm{\cdot}}(\eta),
	\end{aligned}
	\eeq
	where the remainders have operator norms of size $O(\eta)$ due to \eqref{eq:asymp_M}.

	Another notation we recall from \cite{Cipolloni-Erdos-Schroder2019arXiv} is the definition of \emph{self-renormalization};
	\beq\label{eq:defn_renorm}
		\ul{Wf(W)}\deq Wf(W)-\E_{\wt{W}}\wt{W} (\partial_{\wt{W}}f)(W),
	\eeq
	where the function $f:M_{2N}(\C)\to M_{2N}(\C)$ has bounded derivative, $\wt{W}$ is the Hermitization of a complex Ginibre ensemble as in Definition \ref{defn:W}, and $(\partial_{\wt{W}}f)(W)$ denotes the directional derivative along $\wt{W}$ evaluated at $W$. Taking $f(W)$ to be $G_{z}$ gives $\ul{WG_{z}}=WG_{z}+\caS[G_{z}]G_{z}$, and from \eqref{eq:Dyson_M} we further have
	\beq\label{eq:renorm}
		G_{z}-M_{z}=-M_{z}\ul{WG_{z}}+M_{z}\caS[G_{z}-M_{z}]G_{z}.
	\eeq
	As we will see in the following sections, Lemma \ref{lem:ll} proves that the second term of \eqref{eq:renorm} is small since $\caS$ acts as a partial trace, allowing us to consider $\caS[G-M]$ as a constant of size $O_{\prec}(N^{-1})$ in effect.
\subsection{Proof of \eqref{eq:comput1}}\label{sec:prf_comput1}
	Finally we are ready to prove \eqref{eq:comput1}. Since this section concerns only three spectral parameters $z_{1},z_{i},$ and $w_{j}$, we abbreviate $G_{1}\equiv G_{z_{1}}$, $G_{i}\equiv G_{z_{i}},$ $G_{j}\equiv G_{w_{j}}$, and the same for $M_{z}$'s. We first apply \eqref{eq:renorm} to the factor of $G_{i}^{[22]}$ in $\frX_{i}$:
	\beq
	\begin{split}
		G_{i}^{[22]}G_{1}^{[21]}A_{1}G_{i}^{[11]}A_{i}
		=&M_{i}^{[22]}G_{1}^{[21]}A_{1}G_{i}^{[11]}A_{i}
		-(M_{i}\ul{WG_{i}})^{[22]}G_{1}^{[21]}A_{1}G_{i}^{[11]}A_{i}	\\
		&+(M_{i}\caS[G_{i}-M_{i}]G_{i})^{[22]}G_{1}^{[21]}A_{1}G_{i}^{[11]}A_{i}	\nonumber\\
		=&(M_{i}I^{\{22\}}G_{1})^{[21]}A_{1}G_{i}^{[11]}A_{i}
		+(M_{i}\caS[G_{i}I^{\{22\}}G_{1}]G_{1})^{[21]}A_{1}G_{i}^{[11]}A_{i}	\\
		&+(M_{i}\caS[G_{i}I^{\{22\}}G_{1}A_{1}^{\{11\}}G_{i}]G_{i})^{[21]}A_{i}	
		-(M_{i}\ul{WG_{i}I^{\{22\}}G_{1}A_{1}^{\{11\}}G_{i}})^{[21]}A_{i}	\label{eq:X1}\\
		&+(M_{i}\caS[G_{i}-M_{i}]G_{i})^{[22]}G_{1}^{[21]}A_{1}G_{i}^{[11]}A_{i}.	\nonumber
	\end{split}
	\eeq
	Note that in the second equality we applied \eqref{eq:defn_renorm} twice, namely
	\beqs
	\begin{split}
		\ul{WG_{i}I^{\{22\}}G_{1}}&=\ul{WG_{i}}I^{\{22\}}G_{1}
		+\caS[G_{i}I^{\{22\}}G_{1}]G_{1},\\
		\ul{WG_{i}I^{\{22\}}G_{1}A_{1}^{\{11\}}G_{i}}
		&=\ul{WG_{i}I^{\{22\}}G_{1}}A_{1}^{\{11\}}G_{i}+\caS[G_{i}I^{\{22\}}G_{1}A_{1}^{\{11\}}G_{i}]G_{i}.
	\end{split}
	\eeqs
	By Lemma \ref{lem:2Gll}, \eqref{eq:2G}, and \eqref{eq:2G_mat}, the contribution of the first two terms on the right-hand side of \eqref{eq:X1} is given by
	\begin{align}
		&\brkt{(M_{i}(I^{\{22\}}+\caS[G_{i}I^{\{22\}}G_{1}])G_{1})^{[21]}A_{1}G_{i}^{[11]}A_{i}}	\nonumber\\
		=&\brkt{(M_{i}(I^{\{22\}}+\caS[M_{I^{\{22\}}}(z_{i},z_{1})])M_{A_{1}^{\{11\}}}(z_{1},z_{i}))^{[21]}A_{i}}+O_{\prec}(N^{-1})	
		=O(\eta)+O_{\prec}(N^{-1})=O_{\prec}(N^{-1}),	\label{eq:X1_12}
	\end{align}
	where the last equality is due to our choice of $\eta=N^{-2}$.
	
	Before we proceed to the third term, we consider the last two terms of \eqref{eq:X1}. The fourth term of \eqref{eq:X1} can be estimated using the exact same argument as in \cite[equation (6.21)]{Cipolloni-Erdos-Schroder2019arXiv}, except we use the norm bound \eqref{eq:normG} to replace all factors of $\eta^{-1}$ by $1$. As a result, for generic deterministic matrices $\bsA,\bsA',\bsA''\in M_{2N}(\C)$ with bounded operator norms we find that
	\beq\label{eq:3G_var}
		\Expct{\Big\vert{\frac{1}{N}\Tr{\ul{WG_{i}\bsA G_{1}\bsA'G_{i}\bsA''}}\Big\vert^{2}}}=O(N^{-2}).
	\eeq
	The normalized trace of the fourth term of \eqref{eq:X1} has exactly the same form with $\bsA=I^{\{22\}},\bsA'=A_{1}^{\{11\}}$, and $\bsA''=A_{i}^{\{12\}}M_{i}$, so that its second moment is bounded by $N^{-2}$. For the last term in \eqref{eq:X1}, we use Lemma \ref{lem:ll} to each block of $\caS[G-M]$ and apply the norm bound \eqref{eq:normG} to the rest, which leads to
	\beq\label{eq:X1_5}
		\absv{\brkt{(M_{i}\caS[G_{i}-M_{i}]G_{i})^{[22]}G_{1}^{[21]}A_{1}G_{i}^{[11]}A_{i}}}\prec \frac{1}{N}.
	\eeq
	
	In order to deal with the third term of \eqref{eq:X1}, we repeat the same expansion as \eqref{eq:X1} without the last factor of $A_{i}$. To be precise, we write
	\beq\label{eq:X11}
	\begin{split}
		G_{i}I^{\{22\}}G_{1}A_{1}^{\{11\}}G_{i}&=M_{i}I^{\{22\}}G_{1}A_{1}^{\{11\}}G_{i}+M_{i}\caS[G_{i}I^{\{22\}}G_{1}]G_{1}A_{1}^{\{11\}}G_{i}+M_{i}\caS[G_{i}I^{\{22\}}G_{1}A_{1}^{\{11\}}G_{i}]G_{i}\\
		&-M_{i}\ul{WG_{i}I^{\{22\}}G_{1}A_{1}^{\{11\}}G_{i}}+ M_{i}\caS[G_{i}-M_{i}]G_{i}I^{\{22\}}G_{1}A_{1}^{\{11\}}G_{i}.
	\end{split}
	\eeq
	Then we take the partial trace over the $(1,1)$-th block of \eqref{eq:X11} to get 
	\beq\label{eq:X111}
	\begin{split}
		\brkt{(G_{i}I^{\{22\}}&G_{1}A_{1}^{\{11\}}G_{i})^{[11]}} 
		=\brkt{(M_{i}(I^{\{22\}}+\caS[M_{I^{\{22\}}}(z_{i},z_{1})])M_{A_{1}^{\{11\}}}(z_{1},z_{i}))^{[11]}}	\\
		&+\brkt{(M_{i}\caS[G_{i}I^{\{22\}}G_{1}A_{1}^{\{11\}}G_{i}]M_{i})^{[11]}}
		-\brkt{(M_{i}\ul{WG_{i}I^{\{22\}}G_{1}A_{1}^{\{11\}}G_{i}})^{[11]}}+O_{\prec}(N^{-1})	\\
		=&\frac{z_{1}\ol{z}_{i}}{\absv{z_{1}\ol{z}_{i}-1}^{2}}m_{i}\brkt{A_{1}}
		+\absv{z_{i}}^{-2}\brkt{(G_{i}I^{\{22\}}G_{1}A_{1}^{\{11\}}G_{i})^{[22]}}-e_{i1}+O_{\prec}(N^{-1})	\\
		=&\absv{z_{i}}^{-2}\brkt{(G_{i}I^{\{22\}}G_{1}A_{1}^{\{11\}}G_{i})^{[22]}}-e_{i1}+O_{\prec}(N^{-1}),
	\end{split}
	\eeq
	where we used respectively Lemma \ref{lem:2Gll} and \eqref{eq:2G_mat} in the first and second equalities, and abbreviated 
	\beqs
		e_{i1}\deq \brkt{(M_{i}\ul{WG_{i}I^{\{22\}}G_{1}A_{1}^{\{11\}}G_{i}})^{[11]}}.
	\eeqs
	By the same reasoning, taking the partial trace of the $(2,2)$-th block of $\eqref{eq:X11}$ gives
	\beq\label{eq:X112}
		\brkt{(G_{i}I^{\{22\}}G_{1}A_{1}^{\{11\}}G_{i})^{[22]}}=\absv{z_{i}}^{-2}\brkt{(G_{i}I^{\{22\}}G_{1}A_{1}^{\{11\}}G_{i})^{[11]}}-e_{i2}+O_{\prec}(N^{-1}),
	\eeq
	where $e_{i2}$ is the $(2,2)$-th partial trace of the same $(2N\times 2N)$ matrix in the definition of $e_{i1}$. Solving \eqref{eq:X111} and \eqref{eq:X112} with respect to the partial traces leads to
	\beq\label{eq:X11_conc}
		\norm{\caS[G_{i}I^{\{22\}}G_{1}A_{1}^{\{11\}}G_{i}]}\leq \absv{\brkt{(G_{i}I^{\{22\}}G_{1}A_{1}^{\{11\}}G_{i})^{[11]}}}+\absv{\brkt{(G_{i}I^{\{22\}}G_{1}A_{1}^{\{11\}}G_{i})^{[22]}}}
		\lesssim \absv{e_{i1}}+\absv{e_{i2}}+O_{\prec}(N^{-1}).
	\eeq
	Note that $e_{i1}$ and $e_{i2}$ both have the same form as \eqref{eq:3G_var}, so that $\bbE[\absv{e_{i1}}^{2}],\bbE[\absv{e_{i2}}^{2}]$ are $O(N^{-2})$. Therefore the normalized trace of the third term on the right-hand side of \eqref{eq:X1} has second moment of $O(N^{-2+\epsilon})$.
	
	Plugging \eqref{eq:X1_12}, \eqref{eq:3G_var}, \eqref{eq:X1_5}, and \eqref{eq:X11_conc} into \eqref{eq:X1}, we obtain that
	\beq\label{eq:X1_conc}
	\frX_{i}=R_{i}+O_{\prec}(N^{-1})
	\eeq
	where $R_{i}$ is a random variable with $\expct{\absv{R_{i}}^{2}}=O(N^{-2})$ uniformly over $i$. This completes the proof of the first estimate in \eqref{eq:comput1}.

	The proof of the second estimate in \eqref{eq:comput1} follows similar lines, and the only difference is that the analogue of \eqref{eq:X1_12} no longer has negligible contribution. To be precise, the exact same reasoning as above proves
	\beq\label{eq:Y11}
		\frY_{j}=\brkt{G_{j}^{[12]}G_{1}^{[21]}A_{1}G_{j}^{[12]}B_{j}\adj}	=\brkt{(M_{j}(I^{\{22\}}+\caS[G_{j} I^{\{22\}}G_{1}])G_{1})^{[11]}A_{1}G_{j}^{[12]}B_{j}\adj} +R_{j}+O_{\prec}(N^{-1}),
	\eeq
	where $R_{j}$ satisfies $\bbE[\absv{R_{j}}^{2}]=O(N^{-2})$. To avoid repetition, we omit the detailed proof of \eqref{eq:Y11} and just mention the key changes. The first term on the right-hand side of \eqref{eq:Y11}, in contrast to \eqref{eq:X1_12}, concentrates around a deterministic value of constant order. This value can be calculated using $\eta=N^{-2}$, Lemma \ref{lem:2Gll}, and \eqref{eq:2G_mat} as
	\beq\label{eq:Y12}
	\begin{split}
		\brkt{(M_{j}(I^{\{22\}}+\caS[G_{j} I^{\{22\}}&G_{1}])G_{1})^{[11]}A_{1}G_{j}^{[12]}B_{j}\adj}	\\
		=&\brkt{(M_{j}(I^{\{22\}}+\caS[M_{I^{\{22\}}}(w_{j},z_{1})])M_{A_{1}^{\{11\}}}(z_{1},w_{j}))^{[12]}B_{j}\adj}+O_{\prec}(N^{-1})	\\
		=&\frac{z_{1}}{1-\ol{w}_{j}z_{1}}\left(\frac{\brkt{A_{1}}\brkt{B_{j}\adj}}{1-z_{1}\ol{w}_{j}}+\brkt{M_{z_{1}}^{[21]}A_{1}M_{w_{j}}^{[12]}B_{j}\adj}\right)+O(\eta)+O_{\prec}(N^{-1})	\\
		=&z_{1}\frac{\brkt{A_{1}}\brkt{B_{j}\adj}}{(1-z_{1}\ol{w}_{j})^{2}}+\frac{\brkt{A_{1}B_{j}\adj}}{\ol{w}_{j}(1-z_{1}\ol{w}_{j})}+O_{\prec}(N^{-1}).
	\end{split}
	\eeq
	Combining \eqref{eq:Y11} and \eqref{eq:Y12} proves the second estimate in \eqref{eq:comput1}.
	
	\subsection{Proof of \eqref{eq:comput2}}\label{sec:prf_comput2}
	Prior to the proof of \eqref{eq:comput2}, we remark that Lemma \ref{lem:ll}, $\eta=N^{-2}$, and \eqref{eq:asymp_M} imply the following;
	\beq\label{eq:entry_prior}
		G^{[kl]}_{ab}\prec \delta_{ab}\lone_{k\neq l}+N^{-1/2}.
	\eeq
	In other words, $G^{[kl]}_{ab}$ is $O_{\prec}(N^{-1/2})$ unless it is a diagonal entry of either $G^{[12]}$ or $G^{[21]}$. This a priori bound will be repeatedly used throughout the remaining sections, since the calculations largely involve case-by-case estimates of resolvent entries.
	
	We now turn to the proof of \eqref{eq:comput2}, whose left-hand side can be expanded as
	\beq\label{eq:higher}
	\begin{split}
		&\frac{\kappa(k,l+1)}{k!l!}\sum_{a,b}\partial^{(k,l)}_{ab}[(G_{1}^{[21]}A_{1})_{ba}\ctr{\caX^{(1)}\caY}]		\\
		=&\sum_{a,b}\kappa(k,l+1)\sum_{k_{1}\leq k,l_{1}\leq l}\left(\frac{\partial^{(k_{1},l_{1})}_{ab}[(G_{1}^{[21]}A_{1})_{ba}]}{k_{1}!l_{1}!}\right)\left(\frac{\partial^{(k-k_{1},l-l_{1})}_{ab}[\ctr{\caX^{(1)}\caY}]}{(k-k_{1})!(l-l_{1})!}\right).
	\end{split}
	\eeq
	When $k+l\geq 4$, by \eqref{eq:normG} we have for all $k_{1}\leq k$ and $l_{1}\leq l$ that
	\beqs
		\partial_{ab}^{(k_{1},l_{1})}[(G_{1}^{[21]}A_{1})_{ba}]\prec 1, \qquad \partial_{ab}^{(k-k_{1},l-l_{1})}[\ctr{\caX^{(1)}\caY}]\prec 1.
	\eeqs
	Thus $\kappa(k,l+1)=O(N^{-(k+l+1)/2})$ gives
	\beqs
		\Absv{\sum_{a,b}\frac{\kappa(k,l+1)}{k!l!}\partial^{(k,l)}_{ab}[(G_{1}^{[21]}A_{1})_{ba}\ctr{\caX^{(1)}\caY}]}
		\prec N^{2-(k+l+1)/2},
	\eeqs
	which exactly matches \eqref{eq:comput2}. Therefore we assume $k+l\in\{2,3\}$ in what follows.
	
	We first consider the case when $k_{1}+l_{1}=k+l$, that is, when all the differentials operators in \eqref{eq:higher} act on the first factor $(G_{1}^{[21]}A_{1})_{ba}$. Then the derivative consists of terms of the form
	\beq\label{eq:der_GA}
		\Tr G_{1}\Delta_{1}G_{1}\Delta_{2}\cdots\Delta_{k+l}G_{1}(A_{1}\Delta_{ab})^{\{12\}},
	\eeq
	where $\Delta_{i}$'s are either $\Delta_{ab}^{\{12\}}$ or $\Delta_{ba}^{\{21\}}$, which reduces to products of $(k+l)$ entries of $G$ and an entry of $GA^{\{12\}}$. In particular, for any choice of $\Delta_{i}$, each entry of $G$ appearing in \eqref{eq:der_GA} must be one of the following;
	\beqs
		G^{[11]}_{aa}, \quad G^{[12]}_{ab}, \quad G^{[21]}_{ba}, \quad G^{[22]}_{bb}.
	\eeqs
	Hence \eqref{eq:entry_prior} proves that
	\beqs
		\absv{\Tr G_{1}\Delta_{1}\cdots\Delta_{k+l}G_{1}(A_{1}\Delta_{ab})^{\{12\}}}\prec N^{-(k+l)/2}+\delta_{ab},
	\eeqs
	so that 
	\beq\label{eq:der_XX}
		\kappa(k,l+1)\sum_{a,b}\ctr{\partial^{(k,l)}_{ab}[\Tr G_{1}^{\{21\}}A_{1}\Delta_{ab}]}=O_{\prec}(N^{3/2-(k+l)})+O_{\prec}(N^{1/2-(k+l)/2})=O_{\prec}(N^{-1/2}).
	\eeq
	Therefore we have that
	\beqs
		\kappa(k,l+1)\sum_{a,b}\Expct{\ctr{\caX^{(1)}\caY}\partial^{(k,l)}_{ab}[(G_{1}^{[21]}A_{1})_{ba}]}
		=O(N^{-1/2+\epsilon}).
	\eeqs
	Thus it suffices to consider the case where $k+l=2,3$ and $k_{1}+l_{1}<k+l$, that is, we need to estimate the following quantity;
	\beq\label{eq:remainder}
		\sum_{a,b}\sum_{\bsk,\bsl}\kappa(\absv{\bsk},\absv{\bsl}+1)\frac{\partial^{(k_{1},l_{1})}_{ab}[(G_{1}^{[21]}A_{1})_{ba}]}{k_{1}!l_{1}!}\prod_{i\in\bbrktt{2,p}}\frac{\partial^{(k_{i},l_{i})}_{ab}[\caX_{i}]}{k_{i}!l_{i}!}\prod_{j\in\bbrktt{q}}\frac{\partial^{(k_{p+j},l_{p+j})}_{ab}[\caY_{j}]}{k_{p+j}!l_{p+j}!},
	\eeq
	where $\bsk$ and $\bsl$ run over $(p+q)$-tuples of nonnegative integers with $\absv{\bsk}+\absv{\bsl}=2$ or $3$ and $k_{1}+l_{1}<\absv{\bsk}+\absv{\bsl}$, and we denoted $\absv{\bsk}=\sum_{i\in\bbrktt{p+q}}k_{i}$ and $\absv{\bsl}=\sum_{i\in\bbrktt{p+q}}l_{i}$. Here we used the fact that at least one differential operator acts on $\ctr{\caX^{(1)}\caY}$, so that its derivative is equal to that of the uncentered quantity $\caX^{(1)}\caY$.
	
	In what follows, we present an upper bound of the contribution of the sum in \eqref{eq:remainder} for $\absv{\bsk}+\absv{\bsl}=2$ case by case. The case $\absv{\bsk}+\absv{\bsl}=3$ will be handled in a similar fashion afterwards.
	\begin{itemize}
		\item[\textbf{Case 1:}] $k_{i}=2$ or $l_{i}=2$ for some $i>1$.\\
		Due to similarity, we consider only the case $k_{i}=2$ for some $i\in\bbrktt{2,p}$. In this case, the whole contribution is
		\beq\label{eq:ki=2}
			\frac{\kappa(2,1)}{2}\caX^{(1,i)}\caY\sum_{a,b}(G_{1}^{[21]}A_{1})_{ba}\partial_{ab}^{(2,0)}[\Tr G_{i}^{[21]}A_{i}]
			=\kappa(2,1)\caX^{(1,i)}\caY\sum_{a,b}(G_{1}^{[21]}A_{1})_{ba}(\Tr{G_{i}^{[21]}\Delta_{ab}G_{i}^{[21]}\Delta_{ab}G_{i}^{[21]}A_{i}}).
		\eeq
		Note that the first factor on the right-hand side of \eqref{eq:ki=2} is given by
		\beq\label{eq:factor1}
			(G_{1}^{[21]}A_{1})_{ba}=(M_{1}^{[21]}A_{1})_{ba}+O_{\prec}(N^{-1/2})=-\frac{1}{z_{1}}(A_{1})_{ba}+O_{\prec}(N^{-1/2}),
		\eeq
		using Lemma \ref{lem:ll}, and second factor admits the following bound;
		\beqs
			\absv{\Tr{G_{i}^{[21]}\Delta_{ab}G_{i}^{[21]}\Delta_{ab}G_{i}^{[21]}A_{i}}}=\absv{(G_{i}^{[21]})_{ba}(G_{i}^{[21]}A_{i}G_{i}^{[21]})_{ba}}\prec N^{-1/2}\absv{(A_{i})_{ba}}+N^{-1},
		\eeqs
		where we applied Lemma \ref{lem:2Gll} and \eqref{eq:2G_mat} to get
		\beqs
			(G_{i}^{[21]}A_{i}G_{i}^{[21]})_{ba}=(M_{i}A_{i}^{\{12\}}M_{i})^{[21]}_{ba}+O_{\prec}(N^{-1/2})=\absv{z_{i}}^{-2}(A_{i})_{ba}+O_{\prec}(N^{-1/2}).
		\eeqs
		Combining with $\kappa(2,1)=O(N^{-3/2})$ and \eqref{eq:prior_XY}, the left-hand side of \eqref{eq:ki=2} is stochastically dominated by
		\beqs
			N^{-3/2}\sum_{a,b}\absv{(A_{1})_{ba}}\absv{(A_{i})_{ba}}+N^{-1/2}\leq N^{-3/2}(\Tr \absv{A_{1}}^{2}+\Tr \absv{A_{2}}^{2})+N^{-1/2}\prec N^{-1/2}(1+\norm{A_{1}}+\norm{A_{i}}),
		\eeqs
		which establishes \eqref{eq:comput2} for this case. The same argument applies for $i>p$ and for $l_{i}=2$.
		
		\item[\textbf{Case 2:}] $k_{i}=1=l_{i}$ for some $i>1$. \\
		As above, we focus on the case $i\in\bbrktt{2,p}$. In this case the contribution becomes
		\beqs
		\begin{split}
			&\kappa(1,2)\caX^{(1,i)}\caY\sum_{a,b}(G_{1}^{[21]}A_{1})_{ba} 
			\Tr (G_{i}^{[22]}\Delta_{ba}G_{i}^{[11]}\Delta_{ab} G_{i}^{[21]}+G_{i}^{[21]}\Delta_{ab}G_{i}^{[22]}\Delta_{ba}G_{i}^{[11]})A_{i}	\\
			=&\kappa(1,2)\caX^{(1,i)}\caY\sum_{a,b}(G_{1}^{[21]}A_{1})_{ba} \left((G_{i}^{[11]})_{aa}(G_{i}^{[21]}A_{i}G_{i}^{[22]})_{bb}+(G_{i}^{[22]})_{bb}(G_{i}^{[11]}A_{i}G_{i}^{[21]})_{bb}\right).
		\end{split}
		\eeqs
		Then we use Lemma \ref{lem:2Gll} and \eqref{eq:2G_mat} to get
		\beqs
		\begin{aligned}
			&(G_{i}^{[21]}A_{i}G_{i}^{[22]})_{bb}=(M_{A_{i}^{\{12\}}}(z_{i},z_{i})^{[22]})_{bb}+O_{\prec}(N^{-1/2})=O_{\prec}(N^{-1/2}),	\\
			&(G_{i}^{[11]}A_{i}G_{i}^{[21]})_{bb}=(M_{A_{i}^{\{12\}}}(z_{i},z_{i})^{[11]})_{bb}+O_{\prec}(N^{-1/2})=O_{\prec}(N^{-1/2}).
		\end{aligned}
		\eeqs
		By \eqref{eq:entry_prior}, \eqref{eq:factor1}, and $\kappa(1,2)=O(N^{-3/2})$, the contribution of this case is stochastically dominated by 
		\beq\label{eq:CS_A}
			N^{-5/2}\sum_{a,b}\absv{(A_{1})_{ba}}\leq N^{-3/2}(\Tr\absv{A_{1}}^{2})^{1/2}\leq \norm{A_{1}}N^{-1}.
		\eeq
		
		\item[\textbf{Case 3:}] $k_{1}+l_{1}=1$.\\
		In this case, one derivative is acting on the first factor $\Tr (G_{1}^{[21]}A_{1}\Delta_{ab})$ in \eqref{eq:remainder}, resulting in a factor of
		\beq\label{eq:der_1}
			\partial_{ab}^{(k_{1},l_{1})}[\Tr G_{1}^{[21]}A_{1}\Delta_{ab}]=
			\begin{cases}
				(G_{1}^{[21]})_{ba}(G^{[21]}A_{1})_{ba} & \text{if }k_{1}=1,\\
				(G_{1}^{[22]})_{bb}(G^{[11]}A_{1})_{aa} & \text{if }l_{1}=1.
			\end{cases}
		\eeq
		By \eqref{eq:entry_prior}, this factor is $O_{\prec}(N^{-1/2})$. Then the remaining one differential should act on either $\caX_{i}$ or $\caY_{j}$. If it acts on $\caX_{i}$, we get a factor of
		\beq\label{eq:der_X}
		\partial_{ab}^{(k_{i},l_{i})}\caX_{i}=\begin{cases}
			(G_{i}^{[21]}A_{i}G_{i}^{[21]})_{ba}\prec \absv{(A_{i})_{ba}}+N^{-1/2} & \text{if }k_{i}=1,\\
			(G_{i}^{[11]}A_{i}G_{i}^{[22]})_{ba}\prec N^{-1/2} & \text{if }l_{i}=1.
		\end{cases}
		\eeq
		Applying Cauchy-Schwarz inequality to $\absv{(A_{i})_{ba}}$ as in \eqref{eq:CS_A} proves that the total contribution is of order $O_{\prec}(N^{-1/2})$, and the same applies when the remaining differential acts on $\caY_{j}$'s.
		
		\item[\textbf{Case 4:}] $k_{i}+l_{i}=1=k_{j}+l_{j}$ for some $i\neq j>1$.	\\
		By \eqref{eq:der_X} and Cauchy-Schwarz inequality, the contribution of this case is dominated by
		\beqs
			N^{-3/2}(\Tr\absv{A_{i}}^{2}+\Tr\absv{B_{j}}^{2})+N^{-2}\sum_{a,b}(\absv{(A_{i})_{ab}}+\absv{(B_{j})_{ab}})+N^{-1/2}=O(N^{-1/2})
		\eeqs
		if $i\in\bbrktt{2,p}$ and $j\in\bbrktt{p+1,p+q}$. The same reasoning applies to other choices of $i$ and $j$, proving \eqref{eq:comput2}.
	\end{itemize}
	The four cases above exhaust all possible choices of $(\bsk,\bsl)$ with $\absv{\bsk}+\absv{\bsl}=2$.
	
	Since the proof of \eqref{eq:comput2} for $\absv{\bsk}+\absv{\bsl}=3$ is almost the same, we only present its sketch. Since $\kappa(\absv{\bsk},\absv{\bsl}+1)=O(N^{-2})$, the sum $\kappa(\absv{\bsk},\absv{\bsl}+1)\sum_{a,b}$ becomes the average over $a$ and $b$, and we only need this average to be $O_{\prec}(N^{-1/2})$. For example, using \eqref{eq:factor1} and \eqref{eq:CS_A}, the contribution when $k_{1}+l_{1}=0$ is bounded by
	\beqs
		N^{-2+\epsilon}\sum_{a,b}\absv{(A_{1})_{ba}}+N^{-1/2+\epsilon}\leq N^{-1/2+\epsilon}(1+\norm{A_{1}}),
	\eeqs
	where we simply applied \eqref{eq:prior_XY} and the norm bounds $\norm{G_{z}},\norm{A_{i}},\norm{B_{j}}=O(1)$ to factors other than $\Tr G_{1}^{[21]}A_{1}\Delta_{ab}$. By the same reasoning, \eqref{eq:der_1} and \eqref{eq:der_X} exhaust the case when $k_{i}+l_{i}=1$ for some $i\in\bbrktt{p+q}$. Since the only remaining case is $k_{1}+l_{1}=3=\absv{\bsk}+\absv{\bsl}$, which was already dealt with in \eqref{eq:der_XX}, this completes the proof of \eqref{eq:comput2}.
	
	\subsection{Proof of \eqref{eq:mean_asymp_G}}\label{sec:mean_prf}
	In this section, we prove the asymptotics \eqref{eq:mean_asymp_G} of $\expct{\Tr G^{[21]}A}$. First, we repeat the same expansion using $\ii\eta G+I=W_{z}G$ as in \eqref{eq:Stein_2};
	\beq\label{eq:mean_expa}
	\begin{split}
		\expct{\ii\eta&\Tr G^{[11]}_{1}A_{1}}=\expct{\Tr XG^{[21]}_{1}A_{1}}-z_{1}\expct{\Tr G^{[21]}_{1}A_{1}}-\Tr A_{1}	\\
		=&-z_{1}\expct{\Tr G^{[21]}_{1}A_{1}}-\Tr A_{1}
		+\sum_{\substack{k,l \in\Z_{+}}}\frac{\kappa(k+1,l)}{k!l!}\sum_{a,b}\expct{\partial_{ab}^{(k,l)}[\Tr \Delta_{ab}G^{[21]}_{1}A_{1}]}.
	\end{split}
	\eeq
	Therefore, in order to prove \eqref{eq:mean_asymp_G}, it suffices to prove that the left-hand side and the last term on the right-hand side of \eqref{eq:mean_expa} are $O(N^{-1/2+\epsilon})$. Since the left-hand side is $O_{\prec}(\eta)$ by \eqref{eq:prior}, we may focus on the latter.
	
	Note that $\kappa(2,0)=0$, so that the term corresponding to $(k,l)=(1,0)$ vanishes. When $k=0$ and $l=1$, we have
	\beqs
		\kappa(1,1)\sum_{a,b}\partial_{ab}^{(0,1)}[\Tr \Delta_{ab}G^{[21]}_{1}A_{1}]=-\brkt{G^{[22]}_{1}}\Tr G^{[11]}_{1}A_{1}\prec N(N^{-1}+\eta)^{2}\prec N^{-1},
	\eeqs
	where we used Lemma \ref{lem:ll} and $\eta=N^{-2}$. Next, we trivially handle the case $k+l\geq 4$ using $\kappa(k+1,l)=O(N^{-(k+l+1)/2})$ as in the previous section, that is,
	\beqs
		\kappa(k+1,l)\sum_{a,b}\partial_{ab}^{(k,l)}[\Tr \Delta_{ab}G^{[21]}_{1}A_{1}]\prec N^{-(k+l-3)/2} \qquad \text{if }k+l\geq 4.
	\eeqs
	Thus we have reduced the case to $k+l\in\{2,3\}$ and what we need to estimate is identical to the left-hand side of \eqref{eq:der_XX} except the centering. Therefore it suffices to recall that \eqref{eq:der_XX} followed from the bound without centering and that $\kappa(k,l+1)$ and $\kappa(k+1,l)$ are both $O(N^{-(k+l+1)/2})$. This completes the proof of \eqref{eq:mean_asymp_G}, hence that of Proposition \ref{prop:CLT_G} for complex $\chi$.

	\section{ Extension to real random matrices}\label{sec:real}
		This section is devoted to the proof of Proposition \ref{prop:CLT_G} when $\chi$ is real-valued. Since the proof is almost identical to the complex case, we only point out the necessary modifications. We will repeatedly use that $(G_{z}^{[12]})^{\intercal}=G_{\ol{z}}^{[21]}$ and $(G_{z}^{[11]})^{\intercal}=G_{\ol{z}}^{[11]}$ for real-valued $X$.
		
		First of all, the asymptotic Wick formula \eqref{eq:Wick_result} in Proposition \ref{prop:Wick} should be modified to
		\beq\label{eq:Wick_real}
			\expct{\caX\caY}=\sum_{\substack{P\in\mathrm{PairPart}(S,T) \\ Q\in\mathrm{PairPart}(S^{c},T^{c})}}\prod_{(k,\ell)\in P} V^{\circ}_{k,\ell}\prod_{(k',\ell')\in Q}V_{k',\ell'}+O(N^{-1/2+\epsilon}),
		\eeq
		where $\mathrm{PairPart}(I,J)$ for $I\subset\bbrktt{p},J\subset\bbrktt{q}$ denotes the set of partitions of index set $I\cup(J+p)\subset\bbrktt{p+q}$ into pairs and $V^{\circ}_{k,\ell}$ and $V_{k',\ell'}$ are defined by
		\begin{align*}
			V^{\circ}_{i,i'}&\deq V^{\circ}(z_{i},\ol{z}_{i'})\brkt{A_{i}A_{j}^{\intercal}}, &
			V_{i,i'}&\deq V(z_{i},\ol{z}_{i'}), \\
			V^{\circ}_{i,j+p}&\deq V^{\circ}(z_{i},w_{j})\brkt{A_{i}B_{j}\adj}, &
			V_{i,j+p}&\deq V(z_{i},w_{j}),	\\
			V^{\circ}_{j+p,j'+p}&\deq V^{\circ}(\ol{w}_{j},w_{j'})\ol{\brkt{B_{i}B_{j}^{\intercal}}}, &
			V_{j+p,j'+p}&\deq V(\ol{w}_{j},\ol{w}_{j'})
		\end{align*}
		for $i,i'\in \bbrktt{p}$ and $j,j'\in \bbrktt{q}$ with $V^{\circ}$ and $V$ defined in \eqref{eq:defn_V}. The second necessary modification is in \eqref{eq:Stein_2}; when $\chi$ is a real random variable, we have to replace the complex cumulant expansion in \eqref{eq:Stein_2} with the following real version;
		\beqs
		\expct{X_{ab} f(X_{ab})}=\sum_{k\in\N}\frac{\kappa(k+1)}{k!}\expct{\partial_{ab}^{k}[f](X_{ab})},
		\eeqs
		where $\kappa(k)$ and $\partial_{ab}^{k}$ are defined by the exact same formulas as $\kappa(k,0)$ and $\partial_{ab}^{(k,0)}$, respectively. The derivative of the resolvent is given by
		$\partial_{ab}[G_{z}^{[kl]}]=-G_{z}^{[k1]}\Delta_{ab}G_{z}^{[2l]}-G_{z}^{[k2]}\Delta_{ba}G_{z}^{[1l]}$. After applying these changes, the first term of \eqref{eq:Stein_2} becomes the expectation of
		\beq\label{eq:real_Stein_1}
		-\left({\brkt{G_{z_{1}}^{[22]}}\Tr G_{z_{1}}^{[11]}A_{1}}+\frac{1}{N}\sum_{a,b}(G_{z_{1}}^{[21]})_{ba}(G_{z_{1}}^{[21]}A)_{ba}\right)\ctr{\caX^{(1)}\caY},
		\eeq
		where the second term in the bracket was absent in the complex case. This term is written as
		\beqs
		\frac{1}{N}\sum_{a,b}(G_{z_{1}}^{[21]})_{ba}(G_{z_{1}}^{[21]}A)_{ba}=\brkt{G_{z_{1}}^{[21]}AG_{\ol{z}_{1}}^{[12]}}=\brkt{(G_{z_{1}}A^{\{11\}}G_{\ol{z}_{1}})^{[22]}}.
		\eeqs
		We will see below that Lemma \ref{lem:2Gll} and \eqref{eq:2G} still hold for real $X$, so that the  bound 
		 in \eqref{eq:Stein_1_1} remains true for the analogue of the first term of \eqref{eq:Stein_2} after an application of $\expct{\ctr{X}Y}=\expct{X\ctr{Y}}$. Similarly, \eqref{eq:Wick_start} holds if we replace $\frX_{i}$ and $\frY_{j}$ respectively by 
\beq\label{eq:real_XY}
		\begin{split}
			\wt{\frX}_{i}&\deq-\frac{1}{N}\sum_{a,b}(G_{z_{1}}^{[21]}A_{1})_{ba}\partial_{ab}[\Tr G_{z_{i}}^{[21]}A_{i}]
			=\frac{1}{N}\sum_{a,b}(G_{z_{1}}^{[21]}A_{1})_{ba}\Tr (G_{z_{i}}(\Delta_{ab}^{\{12\}}+\Delta_{ba}^{\{21\}})G_{z_{i}})^{[21]}A_{i}\\
			&=\frac{1}{N}\sum_{a,b}(G_{z_{1}}^{[21]}A_{1})_{ba}\left((G_{z_{i}}^{[21]}A_{i}G_{z_{i}}^{[21]})_{ba}+(G_{z_{i}}^{[11]}A_{i}G_{z_{i}}^{[22]})_{ab}\right)	
			=\frX_{i}+\brkt{G_{z_{1}}^{[21]}A_{1}G_{\ol{z}_{i}}^{[12]}A_{i}^{\intercal}G_{\ol{z}_{i}}^{[12]}},\\
			\wt{\frY}_{j}&\deq-N^{-1}\sum_{a,b}(G_{z_{1}}^{[21]}A_{1})_{ba}\partial_{ab}\Tr [G_{w_{j}}^{[12]}B_{j}\adj]
			=\frY_{j}+\brkt{G_{z_{1}}^{[21]}A_{1}G_{\ol{w}_{j}}^{[11]}\ol{B}_{j}G_{\ol{w}_{j}}^{[22]}}.
		\end{split}
		\eeq
		and we change remainders in cumulant expansions accordingly.
		
		Therefore, it suffices to show the following analogues of \eqref{eq:comput1} and \eqref{eq:comput2} in order to prove \eqref{eq:Wick_real};
		\beq\label{eq:comput_real}
		\begin{split}
			\Expct{\Absv{\wt{\frX}_{i}+\frac{\brkt{A_{1}A_{i}^{\intercal}}}{z_{i}(z_{1}z_{i}-1)}+\frac{z_{1}\brkt{A_{1}}\brkt{A_{i}}}{(z_{1}z_{i}-1)^{2}}}^{2}}&=O(N^{-2+\epsilon}),\\
			\Expct{\Absv{\wt{\frY}_{j}+\frac{\brkt{A_{1}B_{j}\adj}}{\ol{w}_{j}(z_{1}\ol{w}_{j}-1)}+\frac{z_{1}\brkt{A_{1}}\brkt{B_{j}\adj}}{(z_{1}\ol{w}_{j}-1)^{2}}}^{2}}&=O(N^{-2+\epsilon}), \\
			\frac{\kappa(k+1)}{k!}\sum_{a,b}\partial_{ab}^{k}[\Tr (G_{z_{1}}^{[21]}A_{1})\ctr{\caX^{(1)}\caY}]&\prec N^{-1/2-(k-4)_{+}/2}.
		\end{split}
		\eeq
		Note that the estimates in \eqref{eq:comput_real} are the same as \eqref{eq:comput1} and \eqref{eq:comput2} except for that of $\wt{\frX}_{i}$.
		Next, we explain how we modify the proof of \eqref{eq:comput1} to prove the first two estimates of \eqref{eq:comput_real}. The self-renormalization defined in \eqref{eq:defn_renorm} should be modified, where we took $\wt{W}$ to be the Hermitization of a complex Ginibre ensemble. When $X$ is real we take $\wt{W}$ to be that of a real Ginibre ensemble and define $\ul{Wf(W)}$ via the same formula. As a result, we have
		\beqs
			\ul{WG_{z}}	= WG_{z}-\E_{\wt{W}}\wt{W}(\partial_{\wt{W}}G_{z})=WG_{z}+\wt{\caS}[G_{z}]G_{z},
		\eeqs
		where $\wt{\caS}$ is defined in \eqref{eq:tildeS}. In this way, the analogue of \eqref{eq:renorm} becomes
		\beqs
			G_{z}-M_{z}=-M_{z}\ul{WG_{z}}+M_{z}\caS[G_{z}-M_{z}]G_{z}+M_{z}D_{z}G_{z},\qquad D_{z}\deq \frac{1}{N}
			\begin{pmatrix}
				0 & G_{\ol{z}}^{[12]} \\
				G_{\ol{z}}^{[21]} & 0
			\end{pmatrix},
		\eeqs
		where we used $(G_{z}^{[21]})^{\intercal}=G_{\ol{z}}^{[21]}$. Nonetheless, Lemma \ref{lem:2Gll} remains intact; following lines of \cite[Eq. (5.5)]{Cipolloni-Erdos-Schroder2019arXiv}, we have
		\beq
		\begin{split}
			\brkt{G_{z}PG_{w}}=&\brkt{M_{z}PM_{w}}+\brkt{M_{z}P(G_{w}-M_{w})}-\brkt{M_{z}\ul{WG_{z}PG_{w}}} +\brkt{M_{z}\wt{\caS}[G_{z}PG_{w}]M_{w}}	\\
			&+\brkt{M_{z}\wt{\caS}[G_{z}PG_{w}](G_{w}-M_{w})}+\brkt{M_{z}(\wt{\caS}[G_{z}]-\caS[M_{z}])G_{z}PG_{w}}	\\
			=&\brkt{M_{z}PM_{w}}+\brkt{M_{z}\caS[G_{z}PG_{w}]M_{w}}-\brkt{M_{z}\ul{WG_{z}PG_{w}}}+O_{\prec}(N^{-1}),
		\end{split}
		\eeq
		where we used the fact that
		\beqs
			\norm{(\caS-\wt{\caS})[G_{z}]}\leq \frac{1}{N}\norm{G_{z}}\prec \frac{1}{N},\qquad \norm{(\caS-\wt{\caS})[G_{z}PG_{w}]}\leq \frac{1}{N}\norm{G_{z}PG_{w}}\prec \frac{1}{N}.
		\eeqs
		Inspecting the proof of Proposition 5.3 in \cite{Cipolloni-Erdos-Schroder2019arXiv} which proves the bound $\brkt{\ul{WG_{z}PG_{w}}}\prec N^{-1}$, one can see that the same applies to the real case since $\ul{Wf(W)}$ still stands for the remainders of the (real) cumulant expansion. This leads to the same asymptotics for $\brkt{G_{z}PG_{w}}$ as in Lemma \ref{lem:2Gll}, that is, $\wh{\caB}^{-1}(M_{z}PM_{w})$, and similarly isotropic local law is valid. Therefore Lemma \ref{lem:2Gll} is true when $X$ is real as well.
		
		In the same spirit, using $\norm{(\wt{\caS}-\caS)[P]}\leq N^{-1}\norm{P}$ and $\norm{G}\prec 1$, all arguments along the proof of \eqref{eq:comput1} stay the same up to additional errors of $O_{\prec}(N^{-1})$. Thus the first two estimates of \eqref{eq:comput_real} reduce to
		\beq\label{eq:real_XY_result}
			\begin{split}
				&\Expct{\Absv{\brkt{G_{\ol{z}_{i}}^{[12]}G_{z_{1}}^{[21]}A_{1}G_{\ol{z}_{i}}^{[12]}A_{i}^{\intercal}}
					+\frac{\brkt{A_{1}A_{i}^{\intercal}}}{z_{i}(z_{1}z_{i}-1)}+\frac{z_{1}\brkt{A_{1}}\brkt{A_{i}^{\intercal}}}{(z_{1}z_{i}-1)^{2}}}^{2}}=O(N^{-2+\epsilon}), \\
				&\expct{\absv{\brkt{G_{\ol{w}_{j}}^{[22]}G_{z_{1}}^{[21]}A_{1}G_{\ol{w}_{j}}^{[11]}\ol{B}_{j}}}^{2}}=O(N^{-2+\epsilon}).
			\end{split}
		\eeq
		Since the left-hand sides of \eqref{eq:real_XY_result} have exactly the same form as $\frY_{j}$ and $\frX_{i}$, respectively, the result immediately follows.
		
		The proof of the last estimate in \eqref{eq:comput_real} is completely analogous to Section \ref{sec:prf_comput2}; we first exhaust the case $k\geq 4$ and use the exact same division of cases with $k_{i}+l_{i}$ replaced by $k_{i}$, where $k_{i}$ denotes the number of differential operators $\partial_{ab}$ hitting $(G_{1}^{[21]}A_{1})_{ba}$, $\caX_{i}$, or $\caY_{i-p}$. Then the proof immediately follows once we observe from 
		\beq\label{eq:real_der}
			\partial_{ab} G_{z}=G_{z}(\Delta_{ab}^{\{12\}}+\Delta_{ba}^{\{21\}})G_{z}
		\eeq
		that the real partial derivative has the same form as the sum of complex partial derivatives in the complex case;
		\beqs
			(\partial_{ab}^{(1,0)}+\partial_{ab}^{(0,1)})G_{z}=G_{z}(\Delta_{ab}^{\{12\}}+\Delta_{ba}^{\{12\}})G_{z}.
		\eeqs 
		Therefore, after expanding all real partial derivatives using \eqref{eq:real_der}, each individual term has already been covered in Section \ref{sec:prf_comput2}. This completes the proof of \eqref{eq:comput_real}.
		
		Finally, to prove \eqref{eq:mean_asymp_G} for real-valued $X$, we first rewrite \eqref{eq:mean_expa};
		\beq\label{eq:mean_G_real}
			\expct{\ii\eta\Tr G^{[11]}_{1}A_{1}}=-z_{1}\expct{\Tr G^{[21]}_{1}A_{1}}-\Tr A_{1}
			+\sum_{k\in\N}\frac{\kappa(k+1)}{k!}\sum_{a,b}\expct{\partial_{ab}^{k}[\Tr \Delta_{ab}G^{[21]}_{1}A_{1}]}.
		\eeq
		The only difference here is in the first term of the cumulant expansion. Taking $k=1$, we get the same quantity as in \eqref{eq:real_Stein_1},
		\beqs
			-\frac{1}{N}\sum_{a,b}\expct{\Tr G_{1}(\Delta_{ab}^{\{12\}}+\Delta_{ba}^{\{21\}})G_{1}(A_{1}\Delta_{ab})^{\{12\}}}
			=-\expct{\brkt{(G_{z_{1}}A_{1}^{\{11\}}G_{\ol{z}_{1}})^{[22]}}}-\expct{\brkt{G_{z_{1}}^{[22]}\Tr G_{z_{1}}^{[11]}A_{1}}},
		\eeqs
		so that the first term is newly appeared in the real case. By Lemma \ref{lem:2Gll} and \eqref{eq:2G_mat}, we have
		\beqs
			\expct{\brkt{(G_{z_{1}}A_{1}^{\{11\}}G_{\ol{z}_{1}})^{[22]}}}=\brkt{M_{A_{1}^{\{11\}}}(z_{1},\ol{z}_{1})^{[22]}}+O(N^{-1+\epsilon})=\frac{\brkt{A_{1}}}{z_{1}^{2}-1}.
		\eeqs
		Plugging this into \eqref{eq:mean_G_real} proves \eqref{eq:mean_asymp_G} for the real case. This concludes the proof of Proposition \ref{prop:CLT_G} for real-valued $X$.

\end{document}